\numberwithin{equation}{section}
\newtheorem{thm}{Theorem}[section]
\newtheorem{cor}[thm]{Corollary}
\newtheorem{lem}[thm]{Lemma}
\theoremstyle{definition}
\newtheorem{rem}[thm]{Remark}
\newif\ifShowLabels
\newdimen\theight
\def\TeXref#1{
     \leavevmode\vadjust{\setbox0=\hbox{{\tt
            \quad\quad  {\small  \bf #1}}}%
     \theight=\ht0
     \advance\theight  by  \dp0
     \advance\theight  by  \lineskip
     \kern -\theight \vbox  to
     \theight{\rightline{\rlap{\box0}}%
      \vss}%
      }}%
\ifShowLabels \TeXref{#1} \fi}%
\ifShowLabels \TeXref{#1} \fi}%
\ifShowLabels \TeXref{#1} \fi}%
\ifShowLabels \TeXref{#1} \fi}%
\newcommand{\eqRef}[1]%
     {\ifShowLabels \TeXref{#1} \fi
      \begin{equation}\label{#1} }
\newcommand{\vsp}{\vskip 1em}
\newcommand{\vspp}{\vskip 2em}
\newcommand{\NI}{\noindent}
\newcommand{\bea}{\begin{eqnarray}}
\newcommand{\eea}{\end{eqnarray}}
\newcommand{\IR}{I\!\!R}
\newcommand{\be}{\begin{equation}}
\newcommand{\ee}{\end{equation}}
\newcommand{\ben}{\begin{eqnarray*}}
\newcommand{\een}{\end{eqnarray*}}
\newcommand{\lm}{\lambda}
\newcommand{\Om}{\Omega}
\newcommand{\om}{\omega}
\newcommand{\p}{\partial}
\newcommand{\al}{\alpha}
\newcommand{\Lm}{\Lambda}
\newcommand{\ve}{\varepsilon}
\newcommand{\dl}{\delta}
\newcommand{\etb}{\bar{\eta}}
\newcommand{\tht}{\theta}
\newcommand{\hh}{\hat{h}}
\newcommand{\hk}{\hat{k}}
\newcommand{\hT}{\hat{T}}
\newcommand{\hs}{\hat{s}}
\newcommand{\hve}{\hat{\epsilon}}
\newcommand{\hal}{\hat{\alpha}}
\newcommand{\lam}{\lambda}
\newcommand{\gm}{\gamma}
\newcommand{\G}{\Gamma}
\newcommand{\s}{\sigma}
\newcommand{\Dp}{\Delta_p}
\newcommand{\XO}{ \mathcal{P}^+_{\Om_T} }
\newcommand{\XC}{ \overline{\mathcal{P}}^+_{\Om_T} }
\newcommand{\YC}{ \overline{\mathcal{P}}^-_{\Om_T} }
\title[Trudinger's equation]{On the viscosity solutions to Trudinger's equation}
\author[Bhattacharya, Marazzi]{Tilak Bhattacharya and Leonardo Marazzi}
\address{Department of Mathematics, Western Kentucky University, Bowling Green, Ky 42101\\
\vsp
Department of Liberal Arts, Savannah College of Arts and Design, Savannah, Ga 31401}
\thanks{AMS Math Subject Classification 2010: 35K65, 35K55}
\begin{document}

\maketitle

\begin{abstract} We study the existence of positive viscosity solutions to Trudinger's equation for cylindrical domains $\Om\times[0, T)$, where $\Om\subset \IR^n,\;n\ge 2,$ is a bounded domain, $T>0$ and $2\le p<\infty$. We show existence for general domains $\Om,$ when $n<p<\infty$. For $2\le p\le n$, we prove existence for domains $\Om$ that satisfy a uniform outer ball condition. We achieve this by constructing suitable sub-solutions and super-solutions  and applying Perron's method.
\end{abstract}
\section{Introduction}
\vsp
\NI In this work, we study the existence of positive viscosity solutions to Trudinger's equation. This is a follow-up of the work in \cite{BL} where we studied a doubly nonlinear parabolic differential equation involving the infinity-Laplacian. Our goal in the current work is to adapt and apply the ideas developed in \cite{BL} to show existence of solutions to an analoguous
equation involving the $p$-laplacian. 
\vsp
\NI To make our discussion more precise, we introduce some definitions and notations. Let $\Om\subset \IR^n,\;n\ge 2,$ be a bounded domain and $T>0$. Define 
$\Om_T=\Om\times(0,T)$ and $P_T=(\Om\times \{0\})\cup (\p\Om\times [0,T))$ its parabolic boundary. 
\vsp
\NI Let $f\in C(\Om,\IR^+),\;g\in C(\p\Om\times [0,T),\IR^+)$ and $u:\Om_T\times P_T\rightarrow \IR^+$. We take $2\le p<\infty$. Our goal here is to show the existence of a
positive viscosity solution $u$, continuous on $\Om_T\cup P_T$, to the equation  
\bea\label{sec1.1}
&&\mbox{div}(|Du|^{p-2}Du)-(p-1)u^{p-2} u_t=0 \;\;\;\mbox{in $\Om_T$ },\\
&&u(x,0)=f(x)\;\;\mbox{and}\;\;u(x,t)=g(x,t),\;\forall\;(x,t)\in \p\Om\times [0,T).\nonumber
\eea
This doubly nonlinear parabolic differential equation is referred to as Trudinger's equation, see \cite{ED, GV, KK, TR} and the references therein  where results have been stated in the context of weak solutions. Our effort is to discuss existence in the context of viscosity solutions, see \cite{CI, CIL}. The operator
$$\Dp u=\mbox{div}(|Du|^{p-2}Du),\;\;\;2\le p<\infty,$$
is referred to as the $p$-Laplacian and is degenerate elliptic. Parabolic equations involving the $p$-Laplacian have been extensively studied and a detailed discussion may be found in \cite{ED}. The equation in (\ref{sec1.1}) is doubly nonlinear and such equations are also of great interest. 
\vsp
\NI In our study of (\ref{sec1.1}) a central role is played by the following parabolic equation:
\eqRef{sec1.2}
\Dp \eta+(p-1)|D\eta|^p-(p-1)\eta_t=0,\;\mbox{in $\Om_T$},\;\;\;2\le p<\infty,
\ee
with $\eta=\hat{f},$ on $\Om\times\{0\},$ and $\eta=\hat{g},$ on $\p\Om\times[0,T)$, where both $\hat{f}$ and $\hat{g}$ are continuous.
As a matter of fact, we will show that if $u$ solves (\ref{sec1.1}) and $\eta=\log u$ then (\ref{sec1.2}) holds. A great part of this work employs this equivalence to prove the existence of $u$. One may find a detailed study of this equation in \cite{ED, OP, PM}, for instance. Incidentally, the related equation
$\Dp u-u_t=0$ has also been studied in the viscosity setting, see for instance \cite{JL, LM} and references therein. A discussion in the weak solution setting may be found in \cite{ED}. Some of our results do hold for this equation, however, our focus is primarily the study of (\ref{sec1.1}) and (\ref{sec1.2}). 
\vsp
\NI We state our main results as two theorems. The first result addresses the case $n<p<\infty$ and we show existence for general domains.
The second result states the existence result for $2\le p\le n$ and holds for domains $\Om$ that satisfy a uniform outer ball condition. At this time, it is not clear to us how to extend the result to general domains. Our proofs of existence make use of the Perron method, see \cite{CI, CIL} for more details. 
In order to do so we prove a comparison principle for (\ref{sec1.1}) and (\ref{sec1.2}). Incidentally, the comparison principle for (\ref{sec1.2}) implies a quotient type comparison for positive solutions of (\ref{sec1.1}). The major part of this work is devoted to the construction of suitable sub-solutions and super-solutions for (\ref{sec1.1}). More precisely, at each point on $P_T$, we construct sub-solutions and super-solutions which are arbitrarily close to the given data at the point. Using the parabolic counterpart of Theorem 4.1 in \cite{CIL} (see \cite{CI}), we conclude existence. For a short description, see Section 5 in \cite{BL}. 
To achieve our goal the equation in (\ref{sec1.2}) proves particularly useful when working with side conditions along $\p\Om\times [0,T)$. 
\vsp
\NI For ease of presentation, we set
\eqRef{sec1.3}
h(x,t)=\left\{ \begin{array}{lcr} f(x),&\;\forall x\in\Om,\;t=0,\\ g(x,t),&\;\forall(x,t)\in \p\Om\times[0,T). \end{array}\right. 
\ee
\vsp
\NI We obtain
\begin{thm}\label{sec1.4}
Let $\Om\subset \IR^n,\;n\ge 2$, be a bounded domain and $T>0$. Let $h\in C(P_T, \IR^+)$, with $\inf_{P_T} h>0$, and $n<p<\infty$. The problem
\ben 
\Dp u=(p-1) u^{p-2} u_t,\;\;\mbox{and \quad$u(x,t)=h(x,t),\;\forall(x,t)\in P_T$}.
\een
has a unique positive viscosity solution $u$ that is continuous on $\Om_T\cup P_T$.
\end{thm}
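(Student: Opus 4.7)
My strategy is to reduce (\ref{sec1.1}) to the auxiliary equation (\ref{sec1.2}) via the substitution $\eta = \log u$ and then apply Perron's method to the latter. Because $\inf_{P_T} h > 0$, the datum $\hat h := \log h$ is continuous and bounded on $P_T$; a routine chain-rule computation at the level of test functions shows that a positive function $u$ is a viscosity sub-, super-, or solution of the equation in (\ref{sec1.1}) if and only if $\eta := \log u$ is a viscosity sub-, super-, or solution of (\ref{sec1.2}). Thus it suffices to produce a continuous viscosity solution $\eta$ of (\ref{sec1.2}) on $\Om_T$ with $\eta = \hat h$ on $P_T$ and then set $u := e^\eta$.

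\medskip
\textbf{Perron setup.}
Granting the comparison principle for (\ref{sec1.2}) (established earlier in the paper), I would define
\[
\eta(x,t) := \sup\bigl\{\, w(x,t) : w \text{ is a viscosity subsolution of (\ref{sec1.2}) with } w^* \le \hat h \text{ on } P_T \,\bigr\},
\]
and use the parabolic analogue of Theorem 4.1 in \cite{CIL} (cf.\ \cite{CI}) to conclude that $\eta^*$ is a subsolution and $\eta_*$ a supersolution of (\ref{sec1.2}) in $\Om_T$. Constants are classical solutions of (\ref{sec1.2}), so $\min_{P_T}\hat h$ and $\max_{P_T}\hat h$ sandwich $\eta$ globally. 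Continuity up to $P_T$ will follow once, at each $(x_0, t_0) \in P_T$ and each $\ve > 0$, I exhibit local super- and sub-barriers $\psi^\pm_\ve$ of (\ref{sec1.2}), defined near $(x_0, t_0)$, satisfying $\psi^-_\ve \le \hat h \le \psi^+_\ve$ on $P_T$ and $|\psi^\pm_\ve(x_0, t_0) - \hat h(x_0, t_0)| \le \ve$; comparison then pins $\eta^*$ and $\eta_*$ together at $(x_0, t_0)$.

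\medskip
\textbf{The crux: boundary barriers for $p > n$.}
The main obstacle is the construction of lateral barriers at $(x_0, t_0) \in \p\Om \times [0, T)$ when $\Om$ satisfies no geometric hypothesis. This is precisely where $p > n$ enters. Setting $\beta := (p-n)/(p-1) \in (0, 1)$, a direct computation yields
\[
\Dp(|x - x_0|^{\al}) = \al^{p-1}\bigl[\al(p-1) - p + n\bigr]\,|x - x_0|^{\al(p-1)-p} \qquad (x \ne x_0),
\]
which is strictly negative precisely when $\al \in (0, \beta)$, while $|D(|x - x_0|^{\al})|^p = \al^p |x - x_0|^{\al p - p}$, so $\Dp(|x-x_0|^\al)\big/|D(|x-x_0|^\al)|^p$ scales as $|x - x_0|^{-\al}$ and the $\Dp$ term dominates the gradient term near $x_0$. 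Fixing such an $\al$, I would use the model
\[
\psi^+_\ve(x, t) := \hat h(x_0, t_0) + \ve + k\,|x - x_0|^{\al} + B\,(t - t_0),
\]
with $k, R > 0$ chosen so that $\Dp \psi^+_\ve + (p-1)|D\psi^+_\ve|^p \le 0$ on $B_R(x_0)\setminus\{x_0\}$; any $B \ge 0$ then makes $\psi^+_\ve$ a classical supersolution of (\ref{sec1.2}) on the cylinder $(B_R(x_0) \cap \Om) \times [(t_0 - r)_+, t_0 + r]$. Tuning $k, B, R, r$ against the modulus of continuity of $\hat h$ at $(x_0, t_0)$ and pasting with a large constant outside yields a global super-barrier with $\psi^+_\ve(x_0, t_0) = \hat h(x_0, t_0) + \ve$. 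The sub-barrier is built symmetrically, and initial-time interior points $(x_0, 0) \in \Om\times\{0\}$ are handled by even simpler purely temporal barriers of the form $\sup_{B_R(x_0)\cap\Om}\hat f + Mt$ on a small spatial ball.

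\medskip
\textbf{Conclusion.}
Once these barriers are in place, Perron's method delivers a continuous viscosity solution $\eta$ of (\ref{sec1.2}) with $\eta = \hat h$ on $P_T$, and $u := e^\eta$ is the required solution of (\ref{sec1.1}); uniqueness is inherited from the comparison principle via the log substitution. The hypothesis $n < p < \infty$ is used only in the barrier construction above, since $\beta \le 0$ when $2 \le p \le n$ defeats the key monotonicity of $|x - x_0|^{\al}$, which is why the paper's second theorem must impose a uniform outer ball condition on $\Om$.
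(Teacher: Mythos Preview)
Your overall strategy---log substitution, comparison for (\ref{sec1.2}), Perron's method, and lateral barriers built from $r^{\al}$ with $\al\in(0,(p-n)/(p-1))$---matches the paper's, and your diagnosis of why $p>n$ is needed is correct. The gap is in the time dependence of your lateral barrier and hence in the pasting step.

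With $B\ge 0$ your super-barrier $\psi^+_\ve(x,t)=\hat h(x_0,t_0)+\ve+k|x-x_0|^{\al}+B(t-t_0)$ \emph{decreases} for $t<t_0$. Hence it cannot satisfy $\psi^+_\ve\ge\hat h$ on all of $P_T$: for $x\in\p\Om$ close to $x_0$ but $t\ll t_0$ one has $\psi^+_\ve(x,t)\approx\hat h(x_0,t_0)+\ve-B(t_0-t)$, which may lie well below $\hat h(x,t)$. Taking $\min(\psi^+_\ve,C)$ with a large constant $C$ does not help, since $\min(\psi^+_\ve,C)\ge\hat h$ still forces $\psi^+_\ve\ge\hat h$. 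The local-comparison alternative on a sub-cylinder $(B_R(x_0)\cap\Om)\times(t_0-r,t_0+r)$ fails for the same reason on the bottom face $\{t=t_0-r\}$: there you would need $\psi^+_\ve\ge\sup_{P_T}\hat h$, but $\psi^+_\ve(x_0,t_0-r)=\hat h(x_0,t_0)+\ve-Br$. The sub-barrier ``built symmetrically'' has the mirror-image defect for $t<t_0$. Your initial-time barrier $\sup_{B_R(x_0)}\hat f+Mt$ has an analogous problem: at $t=0$ it need not dominate $\hat h$ outside $B_R(x_0)$, so it is not a global super-barrier and cannot be pasted either.

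The paper fixes this by replacing the linear time term with a tent $k\tau-k|t-s|$ and confining the barrier to a cusp-shaped region $R$ (see (\ref{sec5.7}), (\ref{sec5.8}), (\ref{sec5.17})) designed so that the barrier equals the background constant \emph{exactly} on $\p R$; Lemma~\ref{sec5.40} then gives a clean global extension. One must then verify the sub/super-solution property separately for $t>s$, $t<s$, and across $\{t=s\}$, and the constants $k,\tau,c,\dl,\gm$ are coupled (cf.\ (\ref{sec5.5})--(\ref{sec5.6}) and (\ref{sec5.14})--(\ref{sec5.16})). For the initial data the paper uses spatial quadratics multiplied by $e^{\pm\ell t}$ (Section~4), which are global sub/super-solutions of (\ref{sec1.1}) and attain the required boundary inequalities without pasting.
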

\begin{thm}\label{sec1.5}
Let $\Om\subset \IR^n,\;n\ge 2$, be a bounded domain and $T>0$. Let $h\in C(P_T, \IR^+)$, with $\inf_{P_T}h>0$, and $2\le p\le n$. If $\Om$ satisfies an uniform outer ball condition then
the problem
\ben 
\Dp u=(p-1) u^{p-2} u_t,\;\;\mbox{and \quad$u(x,t)=h(x,t),\;\forall(x,t)\in P_T$}.
\een
has a unique positive viscosity solution $u$ that is continuous on $\Om_T\cup P_T$.
\end{thm}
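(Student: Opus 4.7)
The plan is to reduce \eqref{sec1.1} to \eqref{sec1.2} via the logarithmic change of variable $\eta=\log u$ and then carry out Perron's method directly for \eqref{sec1.2}. Because $\inf_{P_T}h>0$, the transformed data $\hh:=\log h$ is continuous and bounded on $P_T$, and, as noted in the introduction, positive viscosity solutions of \eqref{sec1.1} correspond bijectively to viscosity solutions of \eqref{sec1.2} with data $\hh$, so uniqueness of $u$ follows at once from the comparison principle for \eqref{sec1.2}. For existence, I would consider the Perron family $\mathcal{F}$ of viscosity sub-solutions of \eqref{sec1.2} bounded above by $\sup_{P_T}\hh$ and whose upper semicontinuous envelopes lie below $\hh$ on $P_T$, and set $\eta=\sup_{v\in\mathcal{F}}v$. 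The parabolic counterpart of Theorem~4.1 of \cite{CIL} (see \cite{CI}) then ensures that $\eta^*$ is a sub-solution and $\eta_*$ a super-solution of \eqref{sec1.2} in $\Om_T$, so by comparison $\eta^*=\eta_*$ in $\Om_T$ as soon as one verifies $\eta^*=\eta_*=\hh$ on $P_T$.

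The latter boundary matching reduces, by standard Perron bookkeeping, to producing at every $(x_0,t_0)\in P_T$ and every $\ve>0$ a local super-barrier $\overline{w}_\ve$ and a local sub-barrier $\underline{w}_\ve$ for \eqref{sec1.2}: viscosity super/sub-solutions taking the values $\hh(x_0,t_0)\pm\ve$ respectively at $(x_0,t_0)$, satisfying $\underline{w}_\ve\le\hh\le\overline{w}_\ve$ on a parabolic neighborhood of $(x_0,t_0)$ in $P_T$, and with appropriate behavior away from $(x_0,t_0)$ so that they can be glued to $\sup_{P_T}\hh$ and $\inf_{P_T}\hh$. Interior initial points $(x_0,0)\in\Om\times\{0\}$ are handled routinely: a paraboloid in $|x-x_0|^2$ with a sufficiently large linear-in-$t$ correction works, the $(p-1)\eta_t$ term dominating the elliptic part. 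The real work is at a lateral boundary point $(x_0,t_0)\in\p\Om\times[0,T)$ in the range $2\le p\le n$.

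Here the uniform outer ball condition is decisive. Pick $y_0\notin\Om$ with $B(y_0,r_0)\cap\Om=\emptyset$ and $|x_0-y_0|=r_0$, set $\rho(x)=|x-y_0|$, and try the radial ansatz
\be
\overline{w}_\ve(x,t)=\hh(x_0,t_0)+\ve+A\vp(\rho(x))+\mu(t-t_0),
\ee
with $\vp(r_0)=0$, $\vp$ strictly increasing on $[r_0,r_0+\dl]$, and $A,\mu>0$ at our disposal. A direct radial computation gives
\begin{multline*}
\Dp\overline{w}_\ve+(p-1)|D\overline{w}_\ve|^p-(p-1)(\overline{w}_\ve)_t\\
=A^{p-1}\lf[(p-1)|\vp'|^{p-2}\vp''+\tfrac{n-1}{\rho}|\vp'|^{p-2}\vp'\rt]+(p-1)A^p|\vp'|^p-(p-1)\mu.
\end{multline*}
Choosing $\vp$ to be the radial $p$-harmonic profile normalized by $\vp(r_0)=0$---namely $\vp(\rho)=r_0^{(p-n)/(p-1)}-\rho^{(p-n)/(p-1)}$ when $2\le p<n$, and $\vp(\rho)=\log(\rho/r_0)$ when $p=n$---annihilates the bracketed term on $\{\rho>r_0\}$, leaving the residual $(p-1)A^p|\vp'|^p-(p-1)\mu$, which is made nonpositive on the shell $\{r_0\le\rho\le r_0+\dl\}$ by taking $\mu\ge A^p\sup_{[r_0,r_0+\dl]}|\vp'|^p$. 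Finally $A$ is chosen large enough that $\overline{w}_\ve$ dominates $\hh$ on the parabolic boundary of the shell intersected with $\Om_T$, so the barrier can be used in a local comparison against members of $\mathcal{F}$; the sub-barrier is obtained symmetrically.

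The main obstacle throughout is the destabilizing nonlinear term $(p-1)|D\eta|^p$ in \eqref{sec1.2}: a pure $p$-harmonic profile does not kill it, and it must be absorbed by the time correction $\mu(t-t_0)$ on a thin annulus near $x_0$---exactly the neighborhood the uniform outer ball hypothesis supplies. This is also why, in contrast to the case $n<p<\infty$ treated in Theorem~\ref{sec1.4}, the construction does not obviously extend to general domains when $2\le p\le n$.
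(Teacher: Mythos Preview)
Your overall framework (pass to \eqref{sec1.2}, run Perron, build local barriers at every boundary point) matches the paper, and the initial-data barriers are fine. The genuine gap is your super-barrier at a lateral point $(x_0,t_0)\in\p\Om\times(0,T)$.

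With the $p$-harmonic profile $\vp$ you have $\Dp\overline{w}_\ve=0$, so the super-solution inequality reduces to $(p-1)A^p|\vp'|^p-(p-1)\mu\le 0$, forcing $\mu>0$. But then the linear term $\mu(t-t_0)$ is negative for $t<t_0$, and since $\vp(\rho)$ is small when $x$ is near $x_0$, the barrier drops below $\hh$ there. This kills both the direct boundary inequality and any gluing: on the bottom $\{t=t_0-\tau\}$ of a local comparison region, for $x\in\Om$ near $x_0$ one has $\overline{w}_\ve\approx\hh(x_0,t_0)+\ve-\mu\tau$, which is far below $\sup_{P_T}\hh$ (and below any sub-solution in $\mathcal{F}$) once $\mu$ is large. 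Replacing $\mu(t-t_0)$ by $\mu|t-t_0|$ does not help either: for $t<t_0$ one gets $(\overline{w}_\ve)_t=-\mu$, so $\G(\overline{w}_\ve)=(p-1)A^p|\vp'|^p+(p-1)\mu>0$, and the function is no longer a super-solution. In short, a $p$-harmonic spatial profile cannot absorb the destabilizing term $(p-1)|D\eta|^p$ on \emph{both} time-sides of $t_0$.

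The paper avoids this by \emph{not} using a $p$-harmonic profile. It takes the spatial part $-c\,r^{-\gm}$ with $\gm>(n-p)/(p-1)$ (so $\Lm>0$ in \eqref{sec6.1}), which makes $\Dp\eta$ strictly negative, quantitatively large enough to dominate $(p-1)|D\eta|^p+(p-1)k$; the time profile can then be the two-sided piecewise linear $k|t-t_0|-k\tau$ (checked in the viscosity sense at the kink), giving the correct ``indent'' shape for gluing to $\log(M+2\ve)$. Even so, for $2\le p\le n$ a direct choice of constants does not close, and the paper introduces an extra scaling $w(x,\om)=v(x,t)/\lam$, $\om=\lam^{p-2}t$ (Remark~\ref{sec2.14}(a)(iii)) to replace $(p-1)|D\eta|^p$ by $\lam(p-1)|D\eta|^p$ with $\lam$ small; only then can $\gm$, $c$, $\rho$, $\dl$ be tuned so that the super-solution inequality holds on the whole shell. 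Your sub-barrier is easier (the gradient term helps), but ``symmetrically'' is too optimistic: the constraints on the constants are genuinely asymmetric, and the two-sided time profile still has to be verified at $t=t_0$.
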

\NI As corollaries, our work implies existence of solutions of (\ref{sec1.2}) for any bounded continuous initial and boundary data. 
\vsp
\NI We now describe the general layout of the work. In Section 2, we provide definitions and a change of variables formula showing the equivalence of (\ref{sec1.1}) and (\ref{sec1.2}). Some additional change of variables results are also stated. These will be followed by a result on a separation of variables. Section 3 contains a maximum principle, a comparison principle and their consequences. The maximum principle requires no sign conditions, however the main comparison principle holds only for positive solutions. This leads to a quotient version of the comparison for solutions to (\ref{sec1.1}), see \cite{BL}. 
In Section 4, we construct sub-solutions and super-solutions for the initial data ($t=0$) for $2\le p<\infty$. is done in Section 4. For this purpose, we work directly with  the parabolic equation in (\ref{sec1.1}). This work is valid for general domains. We discuss the side conditions in Sections 5 and 6 and make use of the parabolic equation in (\ref{sec1.2}).
We address the case $n<p<\infty$ for general domains in Section 5. Section 6 takes up the case $2\le p\le n$ for domains that satisfy a uniform outer ball condition. 
\vsp
\section{Notations, definitions and some preliminaries}
\vsp
\NI In what follows, $\Om\subset \IR^n,\;n\ge 2,$ is a bounded domain and $\p\Om$ its boundary. Let $\overline{A}$ denote the closure of a set $A$ and
$A^c$ the complement of $A$ in $\IR^n$. The letters $x,\;y,\;z$ are used for points in $\IR^n$ and we reserve the letter $o$ for the origin in $\IR^n$. 
The letters $s,\; t$ denote points in $\IR^+\cup\{0\}$.
We use the standard notation for $x\in\IR^n,$ i.e. $x=(x_1,x_2,\cdots, x_n)$. 
For $T>0$, we define the cylinder
\eqRef{sec2.1}
\Om_T=\Om\times (0,T)=\{(x,t)\in \Om\times \IR:\;x\in \Om,\;0<t<T\}.
\ee
The parabolic boundary $P_T$ of $\Om_T$ is the set $(\Om\times \{0\}) \cup (\p \Om\times [0,T) ).$
\vsp
\NI Let $B_r(x)\subset \IR^n$ be the open ball of radius $r$, centered at $x$. For $r>0$ and $\tau>0$, we define the following open cylinder
\eqRef{sec2.2}
D_{r,2\tau}(x,t)=B_r(x)\times \left(t-\tau, t+\tau \right).
\ee  
\vsp
\NI In this work we will always take $2\le p<\infty$. We now define the operators $\Pi$ and $\G$ as follows.
\eqRef{sec2.3}
\Pi(\phi)=\Dp \phi-(p-1) \phi^{p-2} \phi_t\;\;\;\mbox{and}\;\;\;\G(\eta)=\Dp \eta+(p-1)|D\eta|^p-(p-1) \eta_t.
\ee
\vsp
\NI For studying viscosity solutions of (\ref{sec1.1}) and (\ref{sec1.2}), we define expressions which are useful in this context and will apply to the operators in (\ref{sec2.3}). Let 
$\mathcal{S}(n)$ be the set of symmetric $n\times n$ matrices and $tr(X)$ denote the trace of a matrix $X\in S(n)$. For any $(r,a,q, X)\in \IR\times\IR\times \IR^n\times S(n)$, we define the expressions $L_p(q, X)$ and $H_p(r,a,q, X)$ as follows.
Define, for $2\le p<\infty$,
\bea\label{sec2.4}
L_p(q, X)=\left\{ \begin{array}{cc} |q|^{p-2}tr(X)+(p-2)|q|^{p-4} q_i q_j X_{ij},& \;\mbox{for $q\ne 0$},\\
                             tr(X),& \;\mbox{$p=2$},\\
                          0,& \;\mbox{for $q=0,\;\;p\ne 2.$} \end{array}\right.
\eea
The quantity $L_p(q, X)$ is the version for the differentiated $p$-Laplacian and $L_2(p,X)=tr(X).$
Related to the operator $\Pi$ in (\ref{sec2.3}) is the following expression:
\eqRef{sec2.5}
 T_p(r, a,q, X)=L_p(q, X)-(p-1)|r|^{p-2} a,\;\;\forall(r,a,q,X)\in \IR\times\IR\times \IR^n\times S(n).
\ee
Similarly, related to $\G$, we have  
\eqRef{sec2.6}
K_p(a,q, X)=L_p(q,X)+(p-1)|q|^p-(p-1) a,\;\;\;\forall(a,q,X)\in \IR\times\IR^n\times S(n).
\ee
\vsp
\NI We now recall the definition of viscosity sub-solutions and super-solutions, both via semi-jets and test functions, see \cite{CIL}.
From hereon, $usc(A)$ and $lsc(A)$ denote the sets of all functions that are upper semi-continuous and lower semi-continuous on a set $A,$ respectively. 
\vsp
\NI Let $u:\Om_T\cup P_T:\rightarrow \IR$, and $(y,s)\in \Om_T$. We recall the definitions of semi-jets $\mathcal{P}^+_{\Om_T}u(y,s)$ and $\mathcal{P}^-_{\Om_T}u(y,s)$. An element $(a,q,X)\in \IR\times \IR^n\times \mathcal{S}(n)$ is in $\mathcal{P}^+_{\Om_T}u(y,s)$ if
\eqRef{sec2.7}
u(x,t)\le u(y,s)+a(t-s)+\langle p, x-y\rangle+\frac{ \langle X (x-y), x-y\rangle }{2}+o(|t-s|+|x-y|^2)
\ee
as $(x,t)\rightarrow(y,s)$, where $(x,t)\in \Om_T$. We define $\mathcal{P}^-_{\Om_T}u=-\mathcal{P}^+_{\Om_T}(-u).$ 
For the sets $\XC u(y,s)$ and $\YC u(y,s)$, see \cite{CIL}.
\vsp
\NI We present definitions of a sub-solution and a super-solution for the differential equations in (\ref{sec1.1}) and (\ref{sec1.2}). A function $u>0$ is a sub-solution in $\Om_T$ of the equation in (\ref{sec1.1}) or $\Pi(u)\ge 0$ in $\Om_T$, if $u\in usc(\Om_T)$ and 
\eqRef{sec2.8}
T_p(u(x,t), a, q, X) \ge 0,\;\;\forall(a,q,X) \in \XO u(x,t)\;\mbox{and}\;\forall(x,t)\in \Om_T.
\ee
Similarly, $v>0$ is a super-solution in $\Om_T$ or $\Pi(v)\le 0$ in $\Om_T$, if $T_p(v(x,t), a, q, X) \le 0$, $\forall(a,q,X) \in \mathcal{P}^-_{\Om_T} u(x,t)$ and $\forall(x,t)\in \Om_T.$
\vsp
\NI The definitions of a sub-solution $u$ and a super-solution $v$ of (\ref{sec1.2}) are given in terms of $K_p$, see (\ref{sec2.6}). In this case we write $\G( u)\ge 0$ and $\G (v)\le 0$ respectively. More precisely, $u$ is a sub-solution of the equation in (\ref{sec1.2}) or $\G(u)\ge 0$ in $\Om_T$ if $u\in usc(\Om_t)$ and
$$K_p(a,q, X)\ge 0,\;\;\forall(x,t)\in \Om_T\;\mbox{and}\;\forall(a,q,X) \in \XO u(x,t).$$
The definition a super-solution $v$ i.e., $\G(v)\le 0$ is similar.
\vsp
\NI We now present the definitions of a sub-solution and a super-solution in terms of test functions. In the rest of this work, we will always take a test function $\psi(x,t)$ to be $C^2$ in $x$ and $C^1$ in $t$ on $\Om_T$. 
\vsp
\NI We say that $u>0$ is a sub-solution of the equation in (\ref{sec1.1}), written as $\Pi(u)\ge 0$ in $\Om_T$, if, $u\in usc(\Om_T)$ and for all test functions $\psi$ such that $u-\psi$ has a local maximum at some $(y,s)\in \Om_T$, we have
\eqRef{sec2.9}
\Dp \psi(y,s)-(p-1)u^{p-2}(y,s)\psi_t(y,s)\ge 0.
\ee
The function $u>0$ is a super-solution i.e., $\Pi(u)\le 0$, if $u\in lsc(\Om)$ and we have that $\Dp \psi(y,s)-(p-1)u^{p-2}(y,s)\psi_t(y,s)\le 0,$ for any $\psi$, a test function, and $(y,s)\in \Om_T$ such that $u-\psi$ has a local minimum at $(y,s)$. We say $\eta$ is a sub-solution of the equation in (\ref{sec1.2}), written as $\G(\eta)\ge 0$ in $\Om_T$, if, for any test function $\psi$ and $(y,s)\in \Om_T$ such that $\eta-\psi$ has a local maximum at $(y,s)$, we have $\G(\psi)(y,s)\ge 0.$ Next, $\eta$ is a super-solution i.e., $\G(\eta)\le 0$ in $\Om_T$, if for any test function $\psi$ and $(y,s)\in \Om_T$ such that $\eta-\psi$ has a local minimum at $(y,s)$, we have $\G(\psi)(y,s)\le 0.$
\vsp
\NI We define $u$ to be a sub-solution of the problem in (\ref{sec1.1}) if $u\in usc(\overline{\Om}\times [0,T))$ and
\eqRef{sec2.10}
\Pi(u)\ge 0,\;\;\mbox{in $\Om_T$,}\;\; u(x,t)\le h(x,t),\;\forall(x,t)\in P_T,
\ee 
see (\ref{sec1.3}) for the definition of $h$. Similarly, $u$ is a super-solution of (\ref{sec1.1}) if $u\in lsc(\Om_T\cup P_T)$ and
\eqRef{sec2.11}
\Pi(u)\le 0,\;\;\mbox{in $\Om_T$,}\;\; u(x,t)\ge h(x,t),\;\forall(x,t)\in P_T
\ee
We say $u$ solves (\ref{sec1.1}) if both (\ref{sec2.10}) and (\ref{sec2.11}) hold. In this case, $u\in C(\Om_T\cup P_T)$, $\Pi(u)=0$ in $\Om_T$ and $u=h$ on $P_T$.
Similar definitions can be provided for the problem in (\ref{sec1.2}).
\vsp
\NI From hereon, all differential equations and inequalities will be understood in the viscosity sense.
\vsp
\NI We now present Lemma \ref{sec2.12} that addresses the change of variables needed for the equivalence of the equations in (\ref{sec1.1}) and (\ref{sec1.2}). This is followed by Remark \ref{sec2.14} that contains some useful observations about change of variables involving the independent variables.
\vsp
\begin{lem}\label{sec2.12}
Let $\Om\subset \IR^n,\;n\ge 2$ and $T>0$. Suppose that $\phi:\Om_T\rightarrow \IR$ and $\phi>0$. Let the operators $\Pi$ and $\G$ be as in (\ref{sec2.3}). Set $\eta=\log \phi$; then the following hold in $\Om_T$:
\ben
&(i)&\mbox{$\phi\in usc(\Om_T)$ and $\Pi(\phi)\ge 0$ if and only if $\eta\in usc(\Om_T)$ and $\G (\eta)\ge 0$. Similarly,} \\
&(ii)&\mbox{$\phi\in lsc(\Om_t)$ and $\Pi(\phi)\le 0$ in $\Om_T$, if and only if $\eta\in lsc(\Om_T)$ and $\G (\eta)\le 0$ in $\Om_T$.}
\een
\NI Thus, $\phi$ solves $\Pi(\phi)=0$ in $\Om_T$ if and only if $\eta$ solves $\G(\eta)=0$ in $\Om_T$.
\end{lem}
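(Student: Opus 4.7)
The plan is to reduce the viscosity equivalence to a pointwise algebraic identity at the classical level, then transfer it using the test-function formulation together with monotonicity of the exponential. First I would carry out the chain-rule computation for smooth $\phi>0$: setting $\eta=\log\phi$ gives $D\phi=\phi D\eta$ and $\phi_t=\phi\,\eta_t$, hence
\[ |D\phi|^{p-2}D\phi=\phi^{p-1}|D\eta|^{p-2}D\eta. \]
The product rule for the divergence then yields $\Dp\phi=\phi^{p-1}\Dp\eta+(p-1)\phi^{p-1}|D\eta|^p$, while $(p-1)\phi^{p-2}\phi_t=(p-1)\phi^{p-1}\eta_t$; subtracting produces the key identity
\[ \Pi(\phi)=\phi^{p-1}\,\G(\eta), \]
which, together with $\phi^{p-1}>0$, settles the smooth case.

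Because $s\mapsto\log s$ and $s\mapsto e^s$ are continuous and strictly increasing on $(0,\infty)$, $\phi\in usc(\Om_T)$ iff $\eta\in usc(\Om_T)$, and similarly for $lsc$. To prove the forward direction of (i), assume $\Pi(\phi)\ge 0$ in the viscosity sense and let $\psi$ be a test function with $\eta-\psi$ attaining a local maximum at $(y,s)\in\Om_T$; after subtracting a constant I may take $\eta(y,s)=\psi(y,s)$. Set $\Psi:=e^\psi$, which is a smooth test function. Monotonicity of the exponential gives $\phi=e^\eta\le e^\psi=\Psi$ near $(y,s)$ with equality at $(y,s)$, so the subsolution condition for $\phi$ yields $\Pi(\Psi)(y,s)\ge 0$. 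Applying the smooth identity to $\Psi$ at $(y,s)$, where $\Psi(y,s)=\phi(y,s)$, produces $\phi(y,s)^{p-1}\G(\psi)(y,s)\ge 0$, and dividing by the positive factor $\phi(y,s)^{p-1}$ gives $\G(\psi)(y,s)\ge 0$.

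Conversely, let $\Psi$ be a test function with $\phi-\Psi$ attaining a local maximum at $(y,s)$, normalized so $\Psi(y,s)=\phi(y,s)>0$. Continuity keeps $\Psi>0$ on a neighborhood of $(y,s)$, so $\psi:=\log\Psi$ is $C^2$ in $x$ and $C^1$ in $t$ there, and $\eta-\psi$ inherits the local maximum; the same identity converts $\G(\psi)(y,s)\ge 0$ back into $\Pi(\Psi)(y,s)\ge 0$. Part (ii) is proved verbatim with \emph{local maximum} replaced by \emph{local minimum} and $usc$ by $lsc$; since $\phi^{p-1}>0$, no inequalities are reversed. Combining (i) and (ii) gives the equivalence for solutions. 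The only subtle point is making sense of $\log\Psi$ as a test function, but this is automatic from $\phi(y,s)>0$ and the purely local nature of the touching condition, so I do not foresee a genuine obstacle.
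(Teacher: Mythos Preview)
Your proof is correct and follows essentially the same strategy as the paper: given a test function $\psi$ touching $\eta$ from above, pass to $e^{\psi}$ (or a constant multiple thereof) as a test function touching $\phi$ from above, and then unwind the chain rule. The paper does this computation inline without isolating the smooth identity $\Pi(\phi)=\phi^{p-1}\G(\eta)$ and without normalizing $\psi(y,s)=\eta(y,s)$; instead it writes the explicit test function $\xi(x,t)=\phi(y,s)\{e^{\psi(x,t)-\psi(y,s)}+\psi(y,s)-1\}$, whose derivatives at $(y,s)$ coincide with those of your $\Psi=e^{\psi}$ (after normalization) up to the harmless additive constant. Your presentation is a bit cleaner because the normalization and the pre-computed identity absorb the bookkeeping, but the underlying argument is identical.
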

\begin{proof} We prove part (i), the proof of part (ii) is analogous. Suppose that $\phi>0$ solves $\Pi(\phi)\ge 0$ in the sense of viscosity. Let $\psi(x,t)$ be a test function and $(y,s)\in \Om_T$ be such that $\eta-\psi$ has a maximum at $(y,s).$ The inequality $(\eta-\psi)(x,t)\le (\eta-\psi)(y,s)$ is equivalent to
\bea\label{sec2.13}
\phi(x,t)&\le& \phi(y,s) e^{\psi(x,t)-\psi(y,s)} \eea
Set $\xi(x,t)
=\phi(y,s)\left\{ e^{\psi(x,t)-\psi(y,s)}+\psi(y,s)-1\right\}$ and 
observe that $\xi(y,s)=\phi(y,s)\psi(y,s)$ and $\xi_t(y,s)=\phi(y,s) \psi_t(y,s)$. Using \eqref{sec2.13} we see that
\eqRef{sec2.130}
(\phi-\xi)(x,t)\le\phi(y,s)[ 1-\psi(y,s)]=(\phi-\xi)(y,s).\ee
We observe
$$\left(\Dp e^{\psi} \right)(y,s)=e^{(p-1)\psi(y,s)} \left(\Dp \psi(y,s)+(p-1) |D\psi|^p(y,s)|  \right).$$
Employing the above, we obtain
\begin{multline}
\Dp \xi(y,s)
=\left(\frac{\phi(y,s)}{e^{\psi(y,s)}} \right)^{p-1} \left(\Dp e^{\psi} \right)(y,s)=(\phi(y,s))^{p-1}\left(\Dp \psi(y,s)+(p-1) |D\psi|^p(y,s)|  \right)
\end{multline}
Since $\phi$ is a sub-solution, i.e., $\Dp \xi(y,s)-(p-1)\phi^{p-2}(y,s)\xi_t(y,s)\ge 0$ (see (\ref{sec2.9}) and (\ref{sec2.130})), the definition of $\xi$
implies that
\ben
\Dp \xi(y,s)&-&(p-1)\phi^{p-2}(y,s)\xi_t(y,s)\\
&=& \phi(y,s)^{p-1} \left\{\Dp \psi(y,s)+ (p-1) |D\psi|^p(y,s)-(p-1)\psi_t(y,s) \right\}\ge 0.
\een
Thus, $\G(\psi)(y,s)\ge 0$ and the claim holds.
\vsp
\NI We prove the converse. Let $\G(\eta)\ge 0$ and suppose that $\psi(x,t)$ is a test function and and $(y,s)\in \Om_T$ is such that
$\phi-\psi$ has a maximum at $(y,s)$. Using $\eta=\log \phi$, we have
\ben
\eta(x,t)&\le& \log \left(\phi(y,s)+\psi(x,t)-\psi(y,s) \right).
\een
Set 
$$\zeta(x,t)
=\log \left(\phi(y,s)+\psi(x,t)-\psi(y,s)\right)-\frac{\psi(y,s)}{\phi(y,s)}.$$
Using the bound on $\eta$, one sees that $(\eta-\zeta)(x,t)\le (\eta-\zeta)(y,s)=\psi(y,s)/\phi(y,s)$.
Thus, $\G(\zeta)(y,s)\ge 0$, see (\ref{sec2.3}) and (\ref{sec2.9}). Differentiating,
\ben
D\zeta(y,s)=\frac{D\psi(y,s)}{\phi(y,s)},\;\zeta_t(y,s)=\frac{\psi_t(y,s)}{\phi(y,s)}\;\mbox{and}\;\Dp\zeta=\frac{\Dp \psi(y,s)}{\phi(y,s)^{p-1}}-\frac{(p-1)|D\psi|^p(y,s)}{\phi(y,s)^p}.
\een
Using these expressions in $\G(\zeta)(y,s)\ge 0$, we obtain
$$\Dp \psi(y,s)-(p-1)\phi^{p-2}(y,s)\psi_t(y,s)\ge 0.$$ 
\end{proof}

\begin{rem}\label{sec2.14} Assume that $o\in \Om$. We discuss some additional change of variables formulas. Let $\al, \;\beta,\;\lm$, $\s$ be positive constants and $\phi$ and $\eta$ be defined on $\Om$. 
Set $z=\al x,\;\om=\beta t$ and $\Om^\al_{\beta T}=\{(\al x, \beta t):\;(x,t)\in \Om\}$.
\vsp
\NI (a) Suppose that $\eta$ solves
\eqRef{sec2.1401}
\Dp \eta+ (p-1)|D\eta|^p-(p-1)\eta_t\ge (\le)0,\;\;\;\mbox{in $\Om_T$.}
\ee
Set $\varphi(z, \om)=\eta(x,t)/\lam$, differentiating $D_x\eta=\lam \al D_z\varphi$, $D_{xx}\eta=\lam \al^2D_{zz}\varphi$, $D_t \eta=\lam \beta D_\om \varphi$ and 
$\Dp \eta=\lam^{p-1}\al^p\Dp \varphi.$
\vsp
\NI (i) From (\ref{sec2.1401}), we see that
$$\Dp \varphi+\lam(p-1) |D\varphi|^p-\frac{(p-1)\beta}{\al^p\lam^{p-2}} \varphi_\om\ge (\le) 0,\;\mbox{in $\Om^\al_{\beta T}$}.$$
\NI(ii) Taking $\lam=1$ and $\beta=\al^p$ in part (i), we get
$$\Dp \varphi+(p-1) |D\varphi|^p-(p-1) \varphi_\om\ge (\le) 0,\;\mbox{in $\Om^\al_{\al^p T}$}.$$
Clearly, the Trudinger equation is invariant under this change of variables. 
Selecting $\lam=(p-1)^{-1}$ and $\beta=\al^p \lam^{p-1}$ in part (i), (\ref{sec2.1401}) yields
$\Dp \varphi+|D\varphi|^p -\varphi_\om\ge (\le )0,
\;\mbox{in $\Om^\al_{\beta T} $}.$
\vsp
\NI (iii) In (i) we choose $\beta=\lam^{p-2} \al^p$. Then (\ref{sec2.1401}) leads to
$$\Dp \varphi+\lam (p-1) |D\varphi|^p-(p-1) \varphi_\om\ge (\le)0.$$
\vsp
\NI(b) Finally, assume that $\phi>0$ solves 
$$\Dp \phi-(p-1) \phi^{p-2} \phi_t\ge (\le) 0, \;\;\mbox{in $\Om_T$.}$$
For the change of variables described above, set $\psi(z, \om)=\phi(x,t)^{1/\tht}$, where $\tht>0$. Then 
$D_x\phi=\theta \al \psi^{\theta-1}D_z\psi$ and $\phi_t=\theta \beta \psi^{\theta-1}\psi_\om$. Hence,
\ben
\Dp \phi&-&(p-1)\phi^{p-2} \phi_t\\
&=&\al^p \theta^{p-1}\mbox{div}(\psi^{(\theta-1)(p-1)}|D\psi|^{p-2} D\psi)-(p-1)\theta \beta \psi^{\theta(p-2)+\theta-1}\psi_\tau\\
&=&\al^p\theta^{p-1} \psi^{(\theta-1)(p-1)}\left( \Dp \psi+(\theta-1)(p-1)\frac{|D\psi|^p}{\psi} \right)-(p-1)\theta \beta \psi^{\theta(p-1)-1}\psi_\om\\
&=&\al^p\theta^{p-1}\psi^{(\theta-1)(p-1)} \left(\Dp \psi+(\theta-1)(p-1)\frac{|D\psi|^p}{\psi}-\frac{(p-1)\beta}{\al^p \theta^{p-2}}\psi^{p-2}\psi_\om \right),
\een
since $\tht(p-1)-1=(\tht-1)(p-1)+p-2$. Taking $\beta=\al^p \theta^{p-2}$ and $\tht=p/(p-1)$, we get that
$$\Dp \psi+\frac{|D\psi|^p}{\psi}-(p-1) \psi_\om \ge (\le)0.$$
\NI Analogues of some of our results in this work hold for the partial differential equations in (i)-(iii) in part (a) and part (b). However, our primary interest will be Trudinger's equation. $\Box$ 
\end{rem}
\NI Finally, we state a separation of variables result that will be used in constructing sub-solutions and super-solutions in Section 4.
\begin{lem}\label{sec2.15} Let $\Om\subset \IR^n$ be a bounded domain, $T>0$ and $\lam\in \IR$. Suppose that $\eta(t)\in C^1$ with $\eta>0$. 
If $\phi(x)\in usc(lsc)(\Om),$ $\phi>0,$ solves 
$\Dp \phi+\lam \phi^{p-1} \ge (\le) 0,$ in $\Om$,
and $\psi(x,t)=\phi(x) \eta(t)$ then
$$\Pi(\psi):=\Dp \psi-(p-1)\psi^{p-2} \psi_t\ge(\le)-\psi^{p-1}(t)\left(\lam+(p-1)\frac{\eta^{\prime}}{\eta} \right),\;\;\mbox{in $\Om_T$}.$$
In particular, if $\eta(t)=e^{\ell t}$ and $\ell+\lam/(p-1)\le(\ge) 0$ then $\Pi(\psi)\ge (\le) 0$, in $\Om_T.$
\end{lem}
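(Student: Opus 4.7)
The plan is to prove the sub-solution assertion; the super-solution version is completely symmetric. First I would fix an admissible test function $\Psi$ and a point $(y,s)\in\Om_T$ at which $\psi-\Psi$ attains a local maximum, normalising by an additive constant so that $\psi(y,s)=\Psi(y,s)$ (the derivatives of $\Psi$ are unaffected). The task then reduces to producing the bound
$$\Dp\Psi(y,s)-(p-1)\psi(y,s)^{p-2}\Psi_t(y,s)\ge -\psi(y,s)^{p-1}\left[\lam+(p-1)\frac{\eta'(s)}{\eta(s)}\right]$$
directly from the elliptic information on $\phi$ together with the separability $\psi=\phi(x)\eta(t)$.

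\vsp
\NI The next step freezes time to turn $\Psi$ into an admissible elliptic test function for $\phi$. Setting $\varphi(x)=\Psi(x,s)/\eta(s)$, which is $C^2$ in $x$ since $\eta(s)>0$, and evaluating the inequality $\psi-\Psi\le 0$ along the slice $t=s$ and then dividing by $\eta(s)$, I would obtain $\phi-\varphi\le 0$ near $y$ with equality at $y$. The sub-solution hypothesis on $\phi$ then yields $\Dp\varphi(y)+\lam\phi(y)^{p-1}\ge 0$.

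\vsp
\NI I would then assemble the parabolic inequality from two simple scalings. On the spatial side, $D\Psi(y,s)=\eta(s)D\varphi(y)$ and $D^2\Psi(y,s)=\eta(s)D^2\varphi(y)$ substituted into (\ref{sec2.4}) give the homogeneity identity $\Dp\Psi(y,s)=\eta(s)^{p-1}\Dp\varphi(y)$ whenever $D\varphi(y)\ne 0$. For the time derivative, the map $t\mapsto\phi(y)\eta(t)-\Psi(y,t)$ attains a local maximum at $t=s$ and is $C^1$ in $t$, forcing $\Psi_t(y,s)=\phi(y)\eta'(s)=\psi(y,s)\eta'(s)/\eta(s)$. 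Multiplying the elliptic inequality by $\eta(s)^{p-1}$, using $\eta(s)^{p-1}\phi(y)^{p-1}=\psi(y,s)^{p-1}$, and subtracting the time-derivative contribution produces exactly the required bound. The ``in particular'' statement is then immediate: for $\eta(t)=e^{\ell t}$ the quotient $\eta'/\eta$ equals the constant $\ell$, so the hypothesis $\ell+\lam/(p-1)\le 0$ makes $\lam+(p-1)\eta'/\eta\le 0$ and the right-hand side nonnegative.

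\vsp
\NI The one step needing care, and the genuine technical obstacle, is the singular case $D\varphi(y)=0$ with $p>2$, where $L_p$ is set to zero by the convention in (\ref{sec2.4}). There the elliptic inequality collapses to $\lam\phi(y)^{p-1}\ge 0$ and, since $\phi>0$, forces $\lam\ge 0$. Simultaneously $\Dp\Psi(y,s)=0$ because $D\Psi(y,s)=\eta(s)D\varphi(y)=0$, so the target inequality reduces to $-(p-1)\eta'(s)/\eta(s)\ge-\lam-(p-1)\eta'(s)/\eta(s)$, i.e.\ $\lam\ge 0$, which we have just verified. Once this degenerate case is handled, every remaining step is a direct computation.
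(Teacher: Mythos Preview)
Your proof is correct and follows essentially the same route as the paper: freeze $t=s$ to obtain an elliptic test function $\zeta(\cdot,s)/\eta(s)$ for $\phi$ at $y$, freeze $x=y$ to compute $\Psi_t(y,s)=\phi(y)\eta'(s)$, and combine using the $(p-1)$-homogeneity of $\Dp$. If anything, you are more careful than the paper, which jumps directly from the maximum of $\phi-\zeta(\cdot,s)/\eta(s)$ to $\Dp\zeta(y,s)+\lam\psi^{p-1}(y,s)\ge 0$ without isolating the homogeneity step or the degenerate case $D\varphi(y)=0$; your separate treatment of the latter is a genuine addition.
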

\begin{proof}  We prove the claim when $\Dp \phi+\lam \phi^{p-1}\ge 0$ in $\Om$. Let $\zeta$ be a test function and $(y,s)\in \Om_T$ be such that
$\psi-\zeta$ has a maximum at $(y,s)$, i.e.,
\eqRef{2.17}
\phi(x) \eta(t)-\zeta(x,t)\le \phi(y) \eta(s)-\zeta(y,s).
\ee
Taking $x=y$ in (\ref{sec2.17}), we obtain $\phi(y)(\eta(t)-\eta(s))\le \zeta(y,t)-\zeta(y,s),$
$\forall\;0<t<T$. This yields $\zeta_t(y,s)=\phi(y) \eta^{\prime}(s)$. Next, taking $t=s$, we obtain
$$\phi(x)-\frac{\zeta(x, s)}{\eta(s)}\le \phi(y)-\frac{\zeta(y,s)}{\eta(s)},\;\;\forall x\in \Om.$$
Thus, $\Dp \zeta(y,s)+\lam \psi^{p-1}(y,s)\ge 0$. Applying these observations, 
\ben
\Dp \zeta(y,s)-(p-1)\psi(y,s)^{p-2}\zeta_t(y,s)
&\ge& -\lam \psi^{p-1}(y,s)-(p-1)\psi(y,s)^{p-2}\phi(y)\eta^{\prime}(s)\\
&=&-\psi^{p-1} \left( \lam+(p-1)\frac{\eta^{\prime}(s)}{\eta(s)} \right)
\een
If $\eta(t)=e^{\ell t}$ then $\eta^{\prime}/\eta=\ell$. The claim holds.
\end{proof}
\vsp
\NI For a fixed $z\in \IR^n$ set $r=|x-z|$. If $f(r)=f(x)$ it is well-known that 
\eqRef{sec2.141}
\Dp f(r)=|f^{\prime}(r)|^{p-2} \left( (p-1) f^{\prime\prime}(r)+\frac{n-1}{r} f^{\prime}(r) \right),\;\;1<p<\infty.
\ee
We now record a simple calculation which will be used in Sections 5 and 6. 
\vsp
\begin{rem}\label{sec2.16}
Let $\gm,\;\lam\in \IR$ and $c>0$. Set $\Lm=(p-n)(p-1)^{-1}-\gm.$
Using (\ref{sec2.141}) we calculate, in $r>0$,
\ben
\Dp (\pm c r^\gm)&=&\pm c^{p-1}|\gm r^{\gm-1}|^{p-2}\left\{ (p-1) \gm (\gm-1)r^{\gm-2}+(n-1)\gm r^{\gm-2} \right\} \\
&=&\pm(p-1)c^{p-1}\gm|\gm|^{p-2}r^{(p-2)(\gm-1)+(\gm-2)} \left( (\gm-1)+\frac{n-1}{p-1}\right) \\
&=&\pm(p-1)c^{p-1}\gm|\gm|^{p-2}r^{p(\gm-1)-\gm} \left(-\Lm\right)=\pm(p-1)c^{p-1}|\gm|^{p}r^{p(\gm-1)} \left( \frac{-\Lm}{\gm r^{\gm} }\right).
\een
Thus,
\bea\label{sec2.17}
\Dp (\pm c r^\gm)&+&\lam(p-1) |D(\pm c r^\gm)|^p\nonumber\\
&=&\lam(p-1)c^p|\gm|^p r^{p(\gm-1)}\pm(p-1)c^{p-1}|\gm|^{p}r^{p(\gm-1)} \left( \frac{-\Lm}{\gm r^{\gm} }\right).\nonumber\\
&=&(p-1)c^{p-1} |\gm|^p r^{p(\gm-1)} \left\{c\lam\pm \left(\frac{-\Lm}{\gm r^{\gm}} \right)  \right\}. \;\;\;\Box
\eea
\end{rem}
\vspp
\section{Maximum and Comparison principles}
\vsp
\NI In this section, we prove a maximum principle and some comparison principles for the equation in (\ref{sec1.1}). The maximum principle is stated for a slightly modified version of the equation in (\ref{sec1.1}) and holds without placing any sign restrictions. The comparison principle is proven using the equation in (\ref{sec1.2}) and implies a quotient type comparison principle for positive solutions to (\ref{sec1.1}). As a consequence, this implies uniqueness for positive viscosity solutions to (\ref{sec1.1}). See \cite{BL} for an analogue for a doubly nonlinear parabolic equation involving the infinity-Laplacian. 
\vsp
\begin{lem}\label{sec3.1}{(Weak Maximum Principle)} Let $\Om\subset \IR^n,\;n\ge 2$ be a bounded domain and $T>0$.\\
\NI (i) If $\phi\in usc(\Om_T\cup P_T)$ solves $\Dp \phi- (p-1)|\phi|^{p-2} \phi_t\ge 0,\;\;\mbox{in $\Om_T$},$ then 
$$\sup_{\Om_{T}}\phi\le\sup_{P_T} \phi=\sup_{\Om_T\cup P_T}\phi.$$
\NI (ii) If $\phi\in lsc(\Om_T\cup P_T)$ and $\Dp \phi- (p-1)|\phi|^{p-2}\phi_t\le 0,\;\;\mbox{in $\Om_T$},$ 
then 
$$\inf_{\Om_T} \phi\ge \inf_{P_T}\phi=\inf_{\Om_T\cup P_T}\phi.$$
\end{lem}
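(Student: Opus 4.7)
The plan for part (i) is a proof by contradiction using a perturbation of $\phi$ in both time and space; part (ii) then follows by applying (i) to $-\phi$, since $\Dp(-\phi)-(p-1)|{-\phi}|^{p-2}(-\phi)_t = -[\Dp\phi-(p-1)|\phi|^{p-2}\phi_t]$, so $-\phi$ is a sub-solution of the same modified equation whenever $\phi$ is a super-solution, and then $\sup(-\phi) \le \sup_{P_T}(-\phi)$ rearranges to $\inf\phi \ge \inf_{P_T}\phi$.

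Suppose for contradiction that $\sup_{\Om_T}\phi > \sup_{P_T}\phi$. Fix a point $x_0 \in \IR^n\setminus\overline{\Om}$ and set $d_0 := \mathrm{dist}(x_0,\overline{\Om}) > 0$. For small $\ve > 0$ consider
$$\phi_\ve(x,t) = \phi(x,t) - \frac{\ve}{T-t} + \ve|x-x_0|^2.$$
The time correction drives $\phi_\ve \to -\infty$ as $t \to T^-$, while the spatial correction perturbs $\sup_{P_T}\phi$ by only $O(\ve)$. Hence for $\ve$ small enough the strict gap between the interior and boundary suprema persists, and since $\phi_\ve$ is USC on $\overline{\Om}\times[0,T)$ it attains its supremum there at some interior point $(y_\ve, s_\ve) \in \Om_T$ with $s_\ve$ bounded away from $T$. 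Because $(0,0,0) \in \XO\phi_\ve(y_\ve, s_\ve)$ at this maximum, a direct Taylor expansion of the correction terms yields the semi-jet inclusion
$$\left(\frac{\ve}{(T-s_\ve)^2},\; -2\ve(y_\ve - x_0),\; -2\ve I\right) \in \XO\phi(y_\ve, s_\ve).$$

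Applying the viscosity sub-solution inequality (\ref{sec2.8}) at this jet and using formula (\ref{sec2.4}) with $|y_\ve - x_0| \ge d_0 > 0$, a direct calculation in the spirit of Remark \ref{sec2.16} gives
$$L_p\bigl(-2\ve(y_\ve-x_0),\,-2\ve I\bigr) = -2^{p-1}\ve^{p-1}|y_\ve-x_0|^{p-2}(n+p-2) < 0.$$
Combined with the manifestly non-positive term $-(p-1)|\phi(y_\ve,s_\ve)|^{p-2}\ve/(T-s_\ve)^2 \le 0$, the left-hand side of the sub-solution inequality $T_p \ge 0$ is strictly negative, a contradiction. The main obstacle I anticipate is the first step: verifying that the interior maximum persists after adding the spatial term $\ve|x-x_0|^2$. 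The choice $x_0 \notin \overline{\Om}$ is essential, because it forces $y_\ve \neq x_0$ and hence a non-trivial first-order component of the semi-jet, which is precisely what breaks the degeneracy $L_p(0,X) = 0$ of the $p$-Laplacian for $p > 2$. A perturbation only in time would stall at any interior maximum where $\phi$ happened to vanish, since the $\phi_t$ coefficient $(p-1)|\phi|^{p-2}$ then vanishes and the inequality becomes $0 \ge 0$.
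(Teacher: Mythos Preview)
Your argument is correct and follows essentially the same approach as the paper: both obtain a contradiction by testing against a function that is quadratic in $|x-x_0|$ with $x_0\notin\overline{\Om}$ (so the gradient at the interior maximum is nonzero and the $p$-Laplacian does not degenerate), together with a time perturbation that forces the maximum into the interior. The only cosmetic differences are that the paper builds an explicit test function $\psi$ on a truncated cylinder $\Om_\tau$ with a piecewise $g(t)$, whereas you perturb $\phi$ directly via the standard barrier $-\ve/(T-t)$ and read off the semi-jet; the resulting computation and contradiction are identical.
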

\begin{proof} We prove part (i). We note that since $\Om$ is bounded, we choose a $z\in \IR^n\setminus \overline{\Om}$ and an $0<R<\infty$ such that $\Om\subset B_R(z)$. Fix $z$ and set $r=|x-z|$. 
\vsp
\NI Fix $\tau$ close to $T$ with $\tau<T$. We first show that the weak maximum principle holds in $\Om_\tau$ for any $\tau<T$. Set 
\eqRef{sec3.2}
m=\sup_{\Om_\tau} \phi,\;\;\ell=\sup_{P_\tau}\phi,\; \;c=\sup_{\overline{\Om}}\phi(x,\tau),\;\;\dl=m-\ell\;\;\mbox{and $k=\max(\dl,\;c-\ell)$}.
\ee
We argue by contradiction and assume that $\dl>0.$ Since $\Om_\tau$ is an open set there is a point $(y,s)\in \Om_\tau$ such that $\phi(y,s)>\ell+3\dl/4$ and $0<s<\tau$. Define
$$g(t)= \left\{ \begin{array}{ccc} 0, && 0\le t\le s,\\ (t-s)^4/(\tau-s)^{4}, && s\le t\le \tau. \end{array} \right. $$
Select $0<\ve\le \min(\dl/4,1/2).$ Set
\ben
\psi(x,t)=\psi(r,t)=\ell+\frac{\ve}{4} +kg(t)-\frac{\ve r^2}{160R^2},\;\;\forall(x,t)\in \overline{\Om}_{\tau}.
\een
Then $\psi(x,t)\ge \ell+\ve/8,\;\forall(x,t)\in \overline{\Om}_\tau,$ and $\psi(x,\tau)\ge c+\ve/8,\;\;\forall x\in \overline{\Om}$.
Moreover,
\ben
\phi(y,s)-\psi(y, s)\ge \ell+\frac{3\dl}{4}-\ell-\frac{\ve}{4}=\frac{3\dl}{4}-\frac{\ve}{4}
\ge \frac{3\dl}{4}-\frac{\dl}{16}>\frac{\dl}{4}>0.
\een
Since $\phi-\psi\le 0$ on $\p \overline{\Om}_{\tau}$, the function $\phi-\psi$ has a positive maximum at some point $(z,\tht)\in \Om_{\tau}$. Setting $\rho=|y-z|$ and using (\ref{sec2.141}) and (\ref{sec3.2}), we get
\ben
\Dp \psi(z,\tht)=|\psi'|^{p-2} \left( (p-1)\psi''+\frac{n-1}{r}\psi' \right)(z,\tht)
=-\left( \frac{\ve}{80 R^2}\right)^{p-1} \rho^{p-2} (n-p-2).
\een
Since $g^{\prime}(t)\ge 0$, we have
\eqRef{sec3.3}
\Dp \psi(z,\tht)<0\le (p-1)|\phi(z,\tht)|^{p-2}\psi_t(z,\tht).
\ee
We obtain a contradiction and our assertion holds in $\Om_\tau$ for any $\tau<T.$ 
\vsp
\NI If $\sup_{\Om_T}\phi>\sup_{P_T} \phi$ then there is a point $(y,s)\in \Om_T$ (with $0<s<T$) such that 
$\phi(y,s)>\sup_{P_T} \phi$. Select $s<\bar{s}<T$. Then, $\sup_{P_T}\phi<\phi(y,s)\le \sup_{\Om_{\bar{s}}}\phi\le  \sup_{P_{\bar{s}}}\phi\le \sup_{P_T}\phi.$ This is a contradiction and the lemma holds. Part (ii) may be proven similarly.
\end{proof}
\vsp
\begin{rem}\label{sec3.4}  If $\phi\in C(\Om_T\cup P_T)$ then equality holds in the conclusions of Lemma \ref{sec3.1}. \\
Lemma \ref{sec2.12} implies that a version of Lemma \ref{sec3.1} holds for $\Dp \eta+ (p-1)|D\eta|^p- (p-1)\eta_t =0.$ It is also clear from the proof that
Lemma \ref{sec3.1} applies to $\Dp \eta- \eta_t= 0$, see (\ref{sec3.3}). 
$\Box$
\end{rem}
\vsp
\NI Next, we prove a comparison principle for (\ref{sec1.1}) under the condition that the sub-solutions and the super-solutions are positive in $\Om_T$, i.e, we require the positivity of their respective infima on $\Om_T\cup P_T$. 

\begin{thm}\label{sec3.5}{(Comparison principle)}
Suppose that $\Om\subset \IR^n$ is a bounded domain and $T>0$. Let $u\in usc(\Om_T\cup P_T)$ and $v\in lsc(\Om_T\cup P_T)$ satisfy
$$\Dp u-(p-1) u^{p-2} u_t\ge 0,\;\;\mbox{and}\;\;\Dp v- (p-1) v^{p-2} v_t\le 0,\;\;\mbox{in $\Om_T$}.$$
Assume that $\min(\inf_{\Om_T\cup P_T} u, \inf_{\Om_T\cup P_T} v)>0$. If $\sup_{P_T}v<\infty$ and
$u\le v$ on $P_T$, then $u\le v$ in $\Om_T$. 

\NI In particular, if $u$ and $v$ are solutions and $u=v$ on $P_T$ then $u=v$ in $\Om_T$.
\end{thm}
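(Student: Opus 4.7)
The strategy is to reduce to equation (\ref{sec1.2}) via Lemma \ref{sec2.12}, and then execute the classical parabolic viscosity comparison through doubling of variables and the Crandall--Ishii lemma. Set $\etb=\log u\in usc(\Om_T\cup P_T)$ and $\var=\log v\in lsc(\Om_T\cup P_T)$; these are well defined by the positivity of $u,v$, and Lemma \ref{sec2.12} ensures $\G(\etb)\ge 0\ge\G(\var)$ in the viscosity sense, while $\etb\le\var$ on $P_T$. It suffices to derive $\etb\le\var$ in $\Om_T$.

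Assume for contradiction that there exists $(x_*,t_*)\in\Om_T$ with $\etb(x_*,t_*)>\var(x_*,t_*)$. For small $\al>0$ set $\var_\al(y,s):=\var(y,s)+\al/(T-s)$. Then $\var_\al\ge\etb$ on $P_T$, $\var_\al$ is a strict super-solution (any test function touching from below at $(y,s)$ satisfies $\G(\psi)(y,s)\le-(p-1)\al/(T-s)^2$), and for $\al$ small enough $\sup_{\Om_T}(\etb-\var_\al)>0$, attained on a compact subset of $\Om\times[0,T)$ since $\al/(T-s)\to\infty$ as $s\to T^-$. Double the variables: for $\ve>0$ consider
\[
\Phi_\ve(x,y,t,s):=\etb(x,t)-\var_\al(y,s)-\frac{|x-y|^2}{2\ve}-\frac{(t-s)^2}{2\ve},
\]
whose maximizer $(x_\ve,y_\ve,t_\ve,s_\ve)$ lies in the interior of $\Om_T\times\Om_T$ for $\ve$ small, with $|x_\ve-y_\ve|^2/\ve,(t_\ve-s_\ve)^2/\ve\to 0$ as $\ve\to 0$. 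The parabolic Crandall--Ishii lemma (cf.\ \cite{CI,CIL}) then produces $(a_1,q_\ve,X_\ve)\in\XC\etb(x_\ve,t_\ve)$ and $(a_2,q_\ve,Y_\ve)\in\YC\var(y_\ve,s_\ve)$ with $q_\ve=(x_\ve-y_\ve)/\ve$, $a_1-a_2=\al/(T-s_\ve)^2$, and
\[
\begin{pmatrix} X_\ve & 0\\ 0 & -Y_\ve\end{pmatrix}\le\frac{3}{\ve}\begin{pmatrix} I & -I\\ -I & I\end{pmatrix},
\]
which in particular yields $X_\ve\le Y_\ve$.

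Inserting these jets into $K_p(a_1,q_\ve,X_\ve)\ge 0$ and $K_p(a_2,q_\ve,Y_\ve)\le 0$ (see (\ref{sec2.6})) and subtracting cancels the common $(p-1)|q_\ve|^p$ term, yielding
\[
L_p(q_\ve,X_\ve)-L_p(q_\ve,Y_\ve)\ge(p-1)(a_1-a_2)=\frac{(p-1)\al}{(T-s_\ve)^2}\ge c_0>0,
\]
with $c_0$ independent of $\ve$ since $s_\ve$ stays bounded away from $T$. But $L_p(q,\cdot)$ is monotone in $X$ for every $q\in\IR^n$ and every $p\ge 2$: for $q\ne 0$ the coefficient matrix $|q|^{p-2}I+(p-2)|q|^{p-4}q\otimes q$ is positive semi-definite, and for $q=0$ the cases in (\ref{sec2.4}) reduce to $L_2(0,X)=\tx{tr}(X)$ or $L_p(0,X)=0$. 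Hence $X_\ve\le Y_\ve$ forces $L_p(q_\ve,X_\ve)\le L_p(q_\ve,Y_\ve)$, contradicting the displayed inequality. Uniqueness follows by applying the comparison once to $(u,v)$ and once to $(v,u)$. The only subtle point is the degenerate case $q_\ve=0$ with $p>2$, where both sides of the monotonicity vanish; there the contradiction still issues from $a_1>a_2$ alone, so the degenerate case is in fact harmless but must be checked.
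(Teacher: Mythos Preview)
Your approach is essentially the same as the paper's: reduce to the $\G$-equation via Lemma~\ref{sec2.12} and then run the standard parabolic viscosity comparison; the paper simply cites ``an adaptation of Theorem~8.2 in \cite{CIL}'' whereas you have unpacked that adaptation explicitly via the doubling/Crandall--Ishii argument. One small omission worth noting: you should invoke Lemma~\ref{sec3.1} (as the paper does) to guarantee that $u$, and hence $\etb=\log u$, is bounded above on $\Om_T$---this is what ensures the penalized supremum $\sup(\etb-\var_\al)$ is finite and attained.
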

\NI{\bf Proof:} Clearly, $u>0$ in $\Om_T$, and since $u\le v$ on $P_T$, by Lemma \ref{sec3.1}, $u$ is bounded. Define
$\eta(x,t)=\log u(x,t)$ and $\zeta(x,t)=\log v(x,t).$ Then $\eta$ and $\zeta$ are both bounded, in particular, from below. By Lemma \ref{sec2.12},
\ben
\Dp \eta+(p-1)|D\eta|^p- (p-1) \eta_t\ge 0\;\;\;\mbox{and}\;\;\;\Dp \zeta+(p-1)|D\zeta|^p-(p-1)\zeta_t\le 0,\;\mbox{in $\Om_T$,}
\een
with $\eta\le \zeta$ on $P_T$. The conclusion follows by an adaptation of Theorem 8.2 in \cite{CIL}. \qquad$\Box$
\vsp
\NI Thus, Theorem \ref{sec3.5} implies uniqueness of positive solutions of (\ref{sec1.1}) and solutions of (\ref{sec1.2}). We derive now some further consequences. For ease of presentation, we recall the notation $\G(u)=\Dp u+(p-1)|Du|^p-(p-1)u_t$, see (\ref{sec2.3}).

\begin{cor}\label{sec3.6} Let $\eta\in usc(\Om_T\cup P_T)$ and $\zeta \in lsc(\Om_T\cup P_T)$. Suppose that
$$\G(\eta)\ge 0\;\;\;\mbox{and}\;\;\;\G(\zeta)\le 0,\;\mbox{in $\Om_T$.}$$
If $\eta$ and $\zeta$ are bounded in $\Om_T\cup P_T$ then $  \sup_{\Om_T}(\eta-\zeta)\le   \sup_{P_T}(\eta-\zeta).$
\vsp
\NI Moreover, if $\eta$ and $\zeta$ are solutions then $\eta,\;\zeta\in C(\Om_T\cup P_T)$ and 
$\sup_{P_T} |\eta-\zeta|=\sup_{\Om_T}|\eta-\zeta|.$ In particular, if $\eta=\zeta$ on $P_T$ then $\eta=\zeta$ in $\Om_T$. 
\end{cor}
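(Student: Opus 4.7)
The plan is to reduce the corollary to Theorem \ref{sec3.5} via the exponential change of variables in Lemma \ref{sec2.12}. Specifically, set $u = e^{\eta}$ and $v = e^{\zeta}$. Since $\eta, \zeta$ are bounded on $\Om_T\cup P_T$, both $u$ and $v$ are bounded above and below by strictly positive constants. By Lemma \ref{sec2.12}, $\Pi(u) \ge 0$ and $\Pi(v) \le 0$ in $\Om_T$.

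To obtain the first inequality, set $c = \sup_{P_T}(\eta-\zeta)$, which is finite by boundedness. Then $\eta \le \zeta + c$ on $P_T$, i.e., $u \le e^c v$ on $P_T$. The key observation is that the operator $\Pi$ is positively homogeneous in the right way: if $\Pi(v) \le 0$, then for any $k > 0$ one has $\Dp(kv) = k^{p-1}\Dp v$ and $(p-1)(kv)^{p-2}(kv)_t = k^{p-1}(p-1)v^{p-2}v_t$, so $\Pi(kv) = k^{p-1}\Pi(v) \le 0$. Hence $\tilde v := e^c v$ is still a positive super-solution of $\Pi$, with $\inf \tilde v > 0$ and $\sup_{P_T}\tilde v < \infty$. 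Applying Theorem \ref{sec3.5} to $u$ and $\tilde v$ yields $u \le \tilde v$ in $\Om_T$; taking logarithms gives $\eta \le \zeta + c$ in $\Om_T$, which is exactly $\sup_{\Om_T}(\eta - \zeta) \le \sup_{P_T}(\eta - \zeta)$. Alternatively, since $\G$ itself is translation-invariant ($\G(\zeta + c) = \G(\zeta) \le 0$), one could phrase the argument entirely at the level of $\eta$ and $\zeta+c$ and invoke a $\G$-version of the comparison principle, which is available by Remark \ref{sec3.4}.

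For the moreover part, assume $\eta$ and $\zeta$ are viscosity solutions, so each is simultaneously a sub- and super-solution; hence each lies in $usc\cap lsc = C(\Om_T\cup P_T)$. Applying the first part twice, once to $(\eta,\zeta)$ and once to $(\zeta,\eta)$, gives
\[
\sup_{\Om_T}(\eta-\zeta) \le \sup_{P_T}(\eta-\zeta), \qquad \sup_{\Om_T}(\zeta-\eta) \le \sup_{P_T}(\zeta-\eta),
\]
and combining yields $\sup_{\Om_T}|\eta-\zeta| \le \sup_{P_T}|\eta-\zeta|$. The reverse inequality $\sup_{P_T}|\eta-\zeta| \le \sup_{\Om_T\cup P_T}|\eta-\zeta|$ is trivial from $P_T\subset \Om_T\cup P_T$, and together these give equality. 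The case $\eta = \zeta$ on $P_T$ is then immediate.

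The only subtle point is ensuring that Theorem \ref{sec3.5} is applicable after the exponential change of variables; this is where boundedness of $\eta$ and $\zeta$ (not merely one-sided) is used, to guarantee both the positive infimum of $u$ and the finite supremum of $\tilde v$ required by the hypotheses of that theorem. Everything else is formal manipulation, so I expect no real obstacle beyond checking this scaling/translation compatibility carefully.
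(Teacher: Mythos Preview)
Your argument is correct and is essentially the paper's own proof: shift $\zeta$ by $c=\sup_{P_T}(\eta-\zeta)$ (equivalently, replace $v=e^{\zeta}$ by $e^{c}v$, using the $(p-1)$-homogeneity of $\Pi$) and invoke Theorem~\ref{sec3.5} via Lemma~\ref{sec2.12}; the paper phrases this as ``$\G(\zeta_k)\le 0$ and apply Theorem~\ref{sec3.5}'', which is the same step.

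One small slip in your ``moreover'' paragraph: the inclusion $P_T\subset\Om_T\cup P_T$ only gives $\sup_{P_T}|\eta-\zeta|\le\sup_{\Om_T\cup P_T}|\eta-\zeta|=\max\bigl(\sup_{\Om_T},\sup_{P_T}\bigr)$, and together with $\sup_{\Om_T}\le\sup_{P_T}$ this does \emph{not} force equality. What you actually need is that $\eta-\zeta$ is continuous on $\Om_T\cup P_T$ and every point of $P_T$ lies in the closure of $\Om_T$, so $\sup_{P_T}|\eta-\zeta|\le\sup_{\Om_T}|\eta-\zeta|$ directly; this is the content of Remark~\ref{sec3.4}, which the paper cites at that point.
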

\begin{proof} Let $k=\sup_{P_T}(\eta-\zeta)$ and $\zeta_k=\zeta+k$. Since $\G(\zeta_k)\le 0$ and $\zeta_k\ge \eta$ on $P_T$, Theorem \ref{sec3.5}, implies that $\sup_{\Om_T}(\eta-\zeta_k)\le \sup_{P_t}(\eta-\zeta_k)=0.$ The last claim follows from Remark \ref{sec3.4}.
\end{proof}
\vsp
\NI Theorem \ref{sec3.5} and Corollary \ref{sec3.6} imply a quotient comparison principle. 
\begin{cor}\label{sec3.7}
Let $u$ and $v$ be as in Theorem \ref{sec3.5}, then 
$$\frac{u}{v}\le \sup_{P_T}\frac{u}{v}, \;\;\;\mbox{or}\;\;\;\frac{u-v}{v}\le\sup_{P_T}\frac{u-v}{v},\;\;\mbox{in $\Om_T$} .$$
\end{cor}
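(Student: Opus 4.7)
\textbf{Proof plan for Corollary \ref{sec3.7}.} The idea is to exploit the homogeneity of the operator $\Pi$ and reduce the quotient bound to a direct application of the comparison principle in Theorem \ref{sec3.5}. Set
\[
M=\sup_{P_T}\frac{u}{v}.
\]
Since $u,v>0$ on $P_T$ and $u\le v$ on $P_T$, we have $0<M\le 1$; in particular $M$ is finite. The goal is to show $u\le Mv$ in $\Om_T$, which is precisely $u/v\le M$.

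The first step is to check that $Mv$ remains a super-solution of $\Pi\le 0$. If $\psi$ is a test function and $v-\psi$ has a local minimum at $(y,s)\in\Om_T$, then $Mv-M\psi$ has a local minimum at the same point, and a direct computation
\[
\Dp(M\psi)-(p-1)(Mv)^{p-2}(M\psi)_t=M^{p-1}\bigl(\Dp\psi-(p-1)v^{p-2}\psi_t\bigr)\le 0
\]
shows that $\Pi(Mv)\le 0$ in $\Om_T$; this uses only that $M>0$ and the homogeneity of each term in $\Pi$. The hypotheses of Theorem \ref{sec3.5} carry over to the pair $(u,Mv)$: both have positive infima (the one for $Mv$ being $M\inf_{\Om_T\cup P_T}v>0$), $\sup_{P_T}(Mv)=M\sup_{P_T}v<\infty$, and by the very definition of $M$ we have $u\le Mv$ on $P_T$. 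Applying Theorem \ref{sec3.5} then yields $u\le Mv$ throughout $\Om_T$, which is the first asserted inequality.

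The second inequality is immediate from the first: since $v>0$,
\[
\frac{u-v}{v}=\frac{u}{v}-1\le M-1=\sup_{P_T}\frac{u-v}{v}\quad\text{in }\Om_T.
\]

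I do not anticipate any serious obstacle. The only point that requires a moment's care is the scaling step $\Pi(Mv)=M^{p-1}\Pi(v)$ at the level of viscosity super-solutions, but this is straightforward once the test-function definition \eqref{sec2.9} is unpacked. One could alternatively route the argument through Corollary \ref{sec3.6} by setting $\eta=\log u$, $\zeta=\log v$, using Lemma \ref{sec2.12} to pass to $\G(\eta)\ge 0$, $\G(\zeta)\le 0$, and then exponentiating; however, this requires boundedness of $\log v$ (hence an a priori upper bound on $v$), which is not part of the hypotheses of Theorem \ref{sec3.5}, so the direct scaling argument is cleaner.
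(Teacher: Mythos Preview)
Your argument is correct, but it proceeds by a different route than the paper. The paper derives Corollary~\ref{sec3.7} from Corollary~\ref{sec3.6}: setting $\eta=\log u$ and $\zeta=\log v$, Lemma~\ref{sec2.12} gives $\G(\eta)\ge 0$ and $\G(\zeta)\le 0$, and Corollary~\ref{sec3.6} then yields $\sup_{\Om_T}(\eta-\zeta)\le\sup_{P_T}(\eta-\zeta)$, which exponentiates to the quotient bound. You instead exploit the $(p-1)$-homogeneity of $\Pi$ directly, observing that $\Pi(Mv)\le 0$ in the viscosity sense whenever $\Pi(v)\le 0$ and $M>0$, and then feed the pair $(u,Mv)$ back into Theorem~\ref{sec3.5}. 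Your approach has the virtue of sidestepping the logarithmic change of variables entirely and, as you note, of not requiring an a~priori upper bound on $v$ in $\Om_T$ (Corollary~\ref{sec3.6} is stated for bounded $\eta,\zeta$, though in fact only $\sup\eta<\infty$ and $\inf\zeta>-\infty$ are really used). The paper's route, on the other hand, makes the additive structure of the comparison for $\G$ explicit, which is useful elsewhere. Both are short; yours is arguably the more self-contained of the two.
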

\begin{proof} This follows from Corollary \ref{sec3.6} by writing $\eta=\log u$ and $\zeta=\log v$. For solutions, we obtain
$$\sup_{P_T} \frac{u}{v}=\sup_{\Om_T}\frac{u}{v}\;\;\;\mbox{and}\;\;\;\sup_{P_T}\frac{v}{u}=\sup_{\Om_T}\frac{v}{u}.$$
\end{proof}
\vspp
\section{Proofs of Theorems \ref{sec1.4} and \ref{sec1.5}. Initial data: $2\le p<\infty$}
\vsp
\NI Our proof of the existence of solutions to (\ref{sec1.1}) involves constructing sub-solutions and super-solutions for the problem (see (\ref{sec2.10}) and (\ref{sec2.11})) that are arbitrarily close, in a local sense, to the data specified on the parabolic boundary $P_T$. For this purpose, we divided our work into three sections. In this section we take up the construction for the initial data at $t=0$. Our work is valid for $2\le p<\infty$ and any bounded $\Om$. We take up the side conditions i.e, data specified along $\p\Om\times [0,T)$, in Sections 5 and 6. 
The ideas used to construct the sub-solutions and super-solutions are quite similar to those in \cite{BL}.
\vsp
\NI We recall the definition of $h$ from (\ref{sec1.3}), 
$$h(x,t)=\left\{ \begin{array}{lcr} f(x),&\;\forall x\in\Om,\;t=0,\\ g(x,t),&\;\forall(x,t)\in \p\Om\times[0,T). \end{array}\right. $$
Note that $h(x,t)$ is continuous on $P_T$. Set
\eqRef{sec4.0}
m=\inf_{P_T} h\;\;\mbox{and}\;\;M=\sup_{P_T} h.
\ee
Assume that $0<m< M<\infty$. If $m=M$ then $u(x,t)=M$ is the unique solution of (\ref{sec1.1}).
In this section we work directly with the operator
\eqRef{sec4.1}
\Pi(\phi)=\Dp \phi-(p-1)\phi^{p-2} \phi_t,\;\;\mbox{in $\Om_T$.} 
\ee
Also, recall from Lemma \ref{sec2.3} that if $\phi(x)$, $\phi>0$, solves
$\Dp \phi+\lam \phi^{p-1}\ge (\le)0$ and $\psi(x,t)=\phi(x) e^{-\ell t}$ then
\eqRef{sec4.2}
\Pi(\psi)=\Dp \psi-(p-1) \psi^{p-2} \psi_t\ge (\le) \psi^{p-1} \left\{ \ell (p-1)-\lam \right\}.
\ee
Using (\ref{sec2.141}) we also note that if $r=|x-y|$, for some $y\in \IR^n$, then
\eqRef{sec4.3}
\Dp r^2=\s_p2^{p-1} r^{p-2},\;\;\mbox{in $\IR^n$,}
\ee
where $\s_p=(p+n-2).$ In what follows, $\ve>0$ is small so that $m-2\ve>0$.
\vsp
\NI{\bf Part I. Sub-solutions: $t=0$}
\vsp
\NI Let $y\in \overline{\Om}$. Assume that $h(y,0)>m$, other wise take the sub-solution to be $m$ in all of $\Om_T$. We discuss the cases (a) $y\in \Om$ and (b) $y\in \p\Om$ separately.
\vsp
\NI {\bf Case (a):} Let $y\in \Om$. By continuity, there is a $0<\dl\le$dist$(y, \p\Om)$ such that 
$$h(y,0)-\ve \le h(x,0)\le h(y,0)+\ve,\;\;x\in B_\dl (y).$$ Set $r=|x-y|$ and take
\eqRef{sec4.30}
\phi(x)=h(y,0)-2\ve -(h(y,0)-m)\frac{r^2 }{\dl^2},\;\;\forall x\in \overline{B}_\dl(y),
\ee
Clearly, $\phi(y)=h(y,0)-2\ve$ and $\phi|_{\p B_\dl(y)}=m-2\ve$. Moreover, by (\ref{sec4.3}) and taking $r=\dl$, we have 
\eqRef{sec4.301}
\Dp \phi= -\frac{\s_p2^{p-1}r^{p-2}}{\dl^{2(p-1)}}(h(y,0)-m)^{p-1}\ge - \frac{\s_p2^{p-1}}{\dl^p}(h(y,0)-m)^{p-1},\;\;\;\mbox{in $B_\dl(y)$}.
\ee
Now take
$$\lam= \frac{\s_p2^{p-1}}{\dl^p}\left(\frac{h(y,0)-m}{m-2\ve}\right)^{p-1}.$$
Since $m-2\ve\le \phi\le h(y,0)-2\ve$, using (\ref{sec4.301}) we get that
\eqRef{sec4.4}
\Dp \phi+\lam \phi^{p-1}\ge \lam (m-2\ve)^{p-1}- \frac{\s_p2^{p-1}}{\dl^p}(h(y,0)-m)^{p-1}\ge 0,\;\;\mbox{in $B_\dl(y)$}.
\ee
Taking $\psi(x,t)=\phi(x) e^{-\lam t/(p-1)}$ in $B_\dl(y)\times (0,T),$ it is clear by (\ref{sec4.2}) that $\Pi(\psi)\ge 0$. Also, $\forall(x,t)\in \overline{B}_\dl(y)\times[0,T],$
\bea\label{sec4.5}
&&(i)\;\psi(y,0)=h(y,0)-2\ve,\;(ii)\;m-2\ve\le \psi(x,0)\le h(x,0)-\ve,\;\;\mbox{and}\\
&&(iii)\;(m-2\ve)e^{-\ell t}\le \psi(x,t)\le \psi_y(y,t)=(h(y,0)-2\ve)e^{-\ell t}.\nonumber
\eea
\vsp
\NI Call $R=\overline{B}_\dl(y)\times [0, T)$; extend $\psi$ as follows:
\eqRef{sec4.6}
\psi_y(x,t)=\left\{ \begin{array}{lcr} \psi(x,t), & \forall (x,t)\in R,\\ (m-2\ve) e^{-\lam t/(p-1)}, & \forall (x,t)\in \Om_T\setminus R.
\end{array}\right. \ee
A simple calculation shows that $\psi_y$ is a sub-solution in $\Om_T\setminus R$. We only need to check that $\psi$ is a sub-solution in $\p B_\dl(y)\times[0,T).$
\vsp
\NI Let $(z,s)\in \p B_\dl(y)\times (0,T)$ and $(a,q,X)\in \mathcal{P}^+_{\Om_T}\psi_y(z,s)$. Thus, for $(x,t)\rightarrow (z,s)$, where $(x,t)\in \Om_T,$ we have
\eqRef{sec4.7}
\psi_y(x,t)- \psi_y(z,s)\le a(t-s)+\langle q, x-z\rangle+\frac{ \langle X (x-z), x-z\rangle }{2}+o(|t-s|+|x-z|^2),
\ee
Taking $x=z$ in (\ref{sec4.7}) we have $\psi_y(z,t)- \psi_y(z,s)\le a(t-s)$, as $t\rightarrow s$. Recalling (\ref{sec4.5})(iii) and (\ref{sec4.6}), we get 
\eqRef{sec4.8}
a=-\frac{(m-2\ve)\lam e^{-\lam s/(p-1)} } {p-1}<0.
\ee
\vsp
\NI Next, take $t=s$ in (\ref{sec4.7}) to obtain $\psi_y(x,s)- \psi_y(z,s)\le \langle q, x-z\rangle +o(|x-z|) ,$ as $x\rightarrow z$. Recalling (\ref{sec4.5}) and (\ref{sec4.6}),
$\psi_y(x,t)\ge (m-2\ve)e^{-\lam t/(p-1)}$, in $\Om_T$, and $ \psi_y(x,s)- \psi_y(z,s)\ge 0$. Thus,
$\langle q, x-z\rangle +o(|x-z|)\ge 0 ,$ as $x\rightarrow z$. Clearly, $q=0$. Using (\ref{sec2.8}) and (\ref{sec4.8})
$$|q|^{p-2}\mbox{tr}(X)+(p-2)|q|^{p-4}\sum q_iq_j X_{ij}-(p-1) a\psi_y^{p-2}(z,s)>0. $$ 
\vsp
\NI To summarize, for every $y\in \Om$ and $\ve>0$, small, the function $\psi_y\in C(\overline{\Om}_T)$ is such that\\
\NI (i) $0<\psi_y\le h$ on $P_T$, (ii) $\psi_y(y,0)=h(y,0)-2\ve,$ and (iii) $\Pi(\psi)\ge 0,$ in $\Om_T$. See (\ref{sec4.5}) and (\ref{sec4.6}). 
For later reference, set for every $y\in \Om$ and $\ve>0$, small,
\eqRef{sec4.9}
\al_{y,\ve}(x,t)=\log \psi_y(x,t),\;\;\forall(x,t)\in \overline{\Om}_T. \;\;\;\Box
\ee
\vsp
\NI {\bf Case (b):} Let $y\in \p\Om.$ By continuity, there are $\dl>0$ and $\tau>0$ such that 
$$h(y,0)-\ve\le h(x,t)\le h(y,0)+\ve,\;\;\forall (x,t)\in \Om_T\cap (B_{\dl}(y)\times[0,\tau]).$$
We choose $\phi(x)$ and $\lam$ as in Case (a). Recalling (\ref{sec4.2}), 
we select $\ell\ge \lam/(p-1)$, such that
$$(h(y,0)-2\ve)e^{-\ell \tau}\le m-2\ve.$$ 
Defining $\psi(x,t)=\phi(x) e^{-\ell t},$ in $\overline{B}_\dl(y)\times[0,T]$, we see that $\psi$ is a sub-solution in $B_\dl(y)\times (0,T).$ We observe that
$(m-2\ve)e^{-\ell t}\le \psi(x,t)\le \psi_y(y,t)=(h(y,0)-2\ve)e^{-\ell t},\;\forall(x,t)\in \overline{B}_\dl(y)\times[0,T]$.
Set
$$\hat{\psi}_y(x,t)=\left\{\begin{array}{lcr} \psi(x,t), & \forall (x,t)\in \Om_T\cap (B_\dl(y)\times [0,T))\\ (m-2\ve)e^{-\ell t}, & \forall (x,t)\in \Om_T\setminus (B_\dl(y)\times [0,T)).
\end{array}\right. $$
Verifying that $\hat{\psi}_y$ is a sub-solution in $\Om_T$ is similar to Case (a). For any $y\in \p\Om$ and $\ve>0$, small, define
\eqRef{sec4.10}
\hal_{y,\ve}(x,t)=\log\hat{\psi}_y(x,t),\;\;\forall(x,t)\in \Om_T. \;\;\;\;\;\Box
\ee
\vsp
\NI{\bf Part II. Super-solutions: $t=0$}
\vsp
\NI As done in Part I, we discuss the two cases: (a) $y\in \Om$, and (b) $y\in \p\Om$. The treatment here is quite similar to that in Part I. 
We assume that $h(y,0)<M$, otherwise the function $\psi=M$, in $\Om_T$, is a super-solution.
\vsp
\NI {\bf Case (a):} Let $y\in \Om$. Select $0<\dl\le$dist$(y,\p\Om)$ such that 
$$h(y,0)-\ve\le h(x,0)\le h(y,0)+\ve,\;\;\forall x\in B_{\dl}(y).$$
Set $r=|x-y|$ and consider 
\eqRef{sec4.11}
\phi(x)=\phi(r)=h(y,0)+2\ve +(M-h(y,0)) \frac{r^2}{\dl^2},\;\;\forall x\in \overline{B}_{\dl}(y).
\ee
Thus, $\phi(y)=h(y,0)+2\ve$, $\phi|_{\p B_\dl(y)}=M+2\ve$ and $0<h(y,0)+2\ve\le \phi(x)\le M+2\ve$. 
Choose
$$\lam=\frac{\s_p2^{p-1}}{\dl^p} \left( \frac{M-h(y,0)}{h(y,0)+2\ve} \right)^{p-1}.$$
Using the observation made above and (\ref{sec4.3}), we get in $0\le r\le \dl$,
\ben
\Dp \phi-\lam \phi^{p-1}&=&\frac{\s_p 2^{p-1} r^{p-2}}{\dl^{2(p-1)}} (M-h(y,0))^{p-1}-\lam \phi^{p-1}\\
&\le& \frac{\s_p 2^{p-1}}{\dl^p} (M-h(y,0))^{p-1}-\lam (h(y,0)+2\ve)^{p-1}= 0.
\een
Set $\psi(x,t)=\phi(x) e^{\lam t/(p-1)}$, in $\overline{B}_{\dl}(y)\times [0,T)$. By (\ref{sec4.2}), $\psi$ is a super-solution in $B_{\dl}(y)\times [0,T)$.
Also, $\forall(x,t)\in \overline{B}_\dl(y)\times[0,T]$,
\bea\label{sec4.110}
&&(i)\;\; \psi(y,0)=h(y,0)+2\ve,\;\;\;(ii)\;\;\psi(x,0)\ge h(x,0)+\ve,\;\forall x\in \overline{B}_\dl(y),\;\;\mbox{and}\\
&&(iii)\;\;(h(y,0)+2\ve)e^{\lam t/(p-1)}=\psi_y(y,t)\le \psi(x,t)\le (M+2\ve) e^{\lam t/(p-1)}.\nonumber
\eea
Set $R=\overline{B}_\dl(y)\times [0, T).$ Extend $\psi$ by setting
\eqRef{sec4.111}
\psi_y(x,t)=\left\{ \begin{array}{lcr} \psi(x,t), & \forall (x,t)\in \Om_T\cap R,\\ (M+2\ve) e^{\lam t/(p-1)}, & \forall (x,t)\in \Om_T\setminus R.
\end{array}\right. \ee
By (\ref{sec4.2}), $\psi_y$ is super-solution in $\Om_T\setminus R$. We only need to check that $\psi$ is a super-solution in $\p B_\dl(y)\times[0,T).$
\vsp
\NI Let $(z,s)\in \p B_\dl(y)\times (0,T)$ and $(a,q,X)\in \mathcal{P}^-_{\Om_T}\psi_y(z,s)$, i.e., as $(x,t)\rightarrow (z,s)$,
$$\psi_y(x,t)- \psi_y(z,s)\ge a(t-s)+\langle q, x-z\rangle+\frac{ \langle X (x-z), x-z\rangle }{2}+o(|t-s|+|x-z|^2).$$
We use (\ref{sec4.110}) and (\ref{sec4.111}) and arguing as in Case (a) of Part I. Taking $x=z$ we get 
$$a=\frac{(M+2\ve)\lam e^{\lam s/(p-1)} } {p-1}>0.$$
Next, note that (\ref{sec4.110}) and (\ref{sec4.111}) imply $\psi_y(x,s)-\psi(z,s)\le 0,\;\forall x\in \Om$. Taking $t=s$, we get $q=0$. 
Using (\ref{sec2.8})
$ |q|^{p-2}\mbox{tr}(X)+(p-2)|q|^{p-4}\sum q_iq_j X_{ij}-(p-1) a\psi_y^{p-2}(z,s)<0.$
\vsp
\NI Summarizing: (i) $ \psi_y\ge h,\;\mbox{on $P_T$},$ (ii) $\psi_y(y,0)=h(y,0)+2\ve,$ and (iii)  $\Pi(\psi_y)\le 0,$ in $\Om_T$, see (\ref{sec4.110}) and (\ref{sec4.111}).
Set for every $y\in \Om$ and $\ve>0$, small,
\eqRef{sec4.12}
\beta_{y,\ve}(x,t)=\log \psi_y(x,t),\;\;\forall(x,t)\in \overline{\Om}_T. \;\;\;\Box
\ee
\vsp
\NI{\bf Case (b):} Let $y\in \p\Om$. 
By continuity, there are $\dl>0$ and $\tau>0$ such that 
$$h(y,0)-\ve\le h(x,t)\le h(y,0)+\ve,\;\;\forall (x,t)\in \Om_T\cap (B_{\dl}(y)\times[0,\tau]).$$
We choose $\phi(x)$ and $\lam$ as in Case (a). Recalling (\ref{sec4.2}) 
we select $\ell\ge \lam/(p-1)$, such that
$$(h(y,0)+2\ve)e^{\ell \tau}\ge M+2\ve.$$ 
Defining
$$\psi(x,t)=\phi(x) e^{\ell t},\;\;\mbox{in $B_\dl(y)\times(0,T)$},$$
we see that $\psi$ is a super-solution in $B_\dl(y)\times (0,T).$ 
Also, for every $(x,t)\in \overline{B}_\dl(y)\times[0,T]$,
$$(h(y,0)+2\ve)e^{\lam t/(p-1)}=\psi_y(y,t)\le \psi(x,t)\le (M+2\ve) e^{\lam t/(p-1)}.$$
Set
$$\hat{\psi}_y(x,t)=\left\{\begin{array}{lcr} \psi(x,t), & \forall (x,t)\in \overline{\Om}_T\cap (B_\dl(y)\times [0,T])\\ (M+2\ve)e^{\ell t}, & \forall (x,t)\in \overline{\Om}_T\setminus (B_\dl(y)\times [0,T]).
\end{array}\right. $$
Verifying that $\hat{\psi}_y$ is a sub-solution in $\Om_T$ is similar to Case (a). Now set
\eqRef{sec4.13}
\hat{\beta}(x,t)=\log \hat{\psi}_y(x,t),\;\;\forall(x,t)\in \overline{\Om}_T. \;\;\;\Box
\ee
\vspp
\section{Side Boundary: The case $n<p<\infty$ and the proof of Theorem \ref{sec1.4} }
\vsp
\NI We now take up the task of constructing sub-solutions and super-solutions for the side boundary $\p\Om\times(0,T)$. Unlike the case of the initial conditions (see Section 4)
we work with the equation in (\ref{sec1.2}) which we recall here. 
\eqRef{sec5.1}
\Dp \eta+(p-1)|\eta|^p-(p-1)\eta_t=0,\;\;\mbox{in $\Om_T$, and $\eta(x,t)=\log h(x,t),\;\forall(x,t)\in P_T$.}
\ee
See Lemma \ref{sec2.12}, where the change of variable $\eta=\log u$ is discussed. We take $n<p<\infty$ and $\Om$ is any bounded domain. 
\vsp
\NI Also recall, the notation $\G(v):=\Dp v+(p-1)|Dv|^p-(p-1)v_t$, see (\ref{sec2.3}) and also the definitions and discussion following (\ref{sec2.6}) and (\ref{sec2.8}). 
In particular, we mention that in order for
$\G(\eta)\ge (\le)0$ we require that $\forall(a,p,X)\in \XO \eta(x,t) (\mathcal{P}^-_{\Om_T}\eta(x,t))$ and $\forall(x,t)\in \Om_T$,
$$|q|^{p-2}\mbox{tr}(X)+(p-2)|q|^{p-4}q_iq_j X_{ij}+(p-1)|q|^p-(p-1)a\ge (\le)0.$$
\vsp
\NI Let $m$ and $M$ be as in (\ref{sec4.0}). Fix $\ve>0$, small, so that $m-2\ve>0$. Note that if $m=M$ the $u(x,t)=m$ is the unique solution to (\ref{sec1.1}). We continue to assume 
that $0<m<M<\infty$. Recall the notation $D_{\rho,2\tht}(x,t)=B_\rho(x)\times (t-\tht, t+\tht)$, see (\ref{sec2.2}).
\vsp
\NI Let $(y,s)\in P_T$ where $s>0$. There is a $\dl_0>0$ and $\tau_0>0$, depending on $y$ and $s$, such that
\eqRef{sec5.2}
h(y,s)-\ve\le h(x,t)\le h(y,s)+\ve,\;\;\;\forall(x,t)\in \overline{D}_{\dl_0,2\tau_0}(y,s)\cap P_T.
\ee
For any $\gm \in \IR$, we set
\eqRef{sec5.3}
\Lm=\frac{p-n}{p-1}-\gm. \ee
Let $r=|x|$; recalling Remark \ref{sec2.16}, (\ref{sec2.17}) and (\ref{sec5.3}) (take $c>0$) we have in $r>0$,
\bea\label{sec5.4}
\Dp (\pm c r^\gm)+(p-1) |D(\pm  cr^\gm)|^p  =(p-1)c^{p-1} |\gm|^p r^{p(\gm-1)} \left\{c\pm \left(\frac{-\Lm}{\gm r^{\gm}} \right)  \right\},
\eea
\NI Before we move on to the construction of the various functions, we state a lemma that would be used in this section and in Section 6. 
\vsp
\begin{lem}\label{sec5.40}
Let $\Om\subset \IR^n,\;n\ge 2,$ be a bounded domain, $T>0$ and $O\subset \Om_T$ be a sub-domain. Suppose that $\al\in \IR$ and $w:\Om_T\rightarrow \IR$ is such that $w=\al$ in $\Om_T\setminus O$. 
\ben
\mbox{ (i) If $w\in usc(\Om_T)$, $w\ge \al$, in $O$, and $\G(w)\ge 0$, in $O$, then $\G(w)\ge 0$, in $\Om_T$.}\\
\mbox{ (ii) If $w\in lsc(\Om_T)$, $w\le \al$, in $O$, and $\G(w)\le 0$, in $O$, then $\G(w)\le 0$, in $\Om_T$.}
\een
\end{lem}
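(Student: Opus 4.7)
I will treat part (i); part (ii) follows by an entirely parallel argument (or by considering $-w$ on the equation $\G$ with sign conventions appropriately modified, though it is cleaner just to mirror the proof). The viscosity condition $\G(w)\ge 0$ in $\Om_T$ is a pointwise property: at each $(y,s)\in\Om_T$ and each test function $\psi$ with $w-\psi$ having a local maximum at $(y,s)$, one must verify $\G(\psi)(y,s)\ge 0$. For $(y,s)\in O$ this is the hypothesis, so only points $(y,s)\in \Om_T\setminus O$ require work.

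The key observation is the following: under the hypotheses of (i) we have $w\ge\al$ on all of $\Om_T$ (trivially on $\Om_T\setminus O$ where $w=\al$, and by assumption on $O$). Suppose $(y,s)\in\Om_T\setminus O$ and $\psi$ is admissible with $w-\psi$ attaining a local maximum at $(y,s)$. Since $(y,s)\notin O$ (and $O$ is open, so even boundary points of $O$ in $\Om_T$ lie in $\Om_T\setminus O$), we have $w(y,s)=\al$. For all nearby $(x,t)$,
\begin{equation*}
\psi(x,t)-\psi(y,s)\;\ge\; w(x,t)-w(y,s)\;=\;w(x,t)-\al\;\ge\;0.
\end{equation*}
Hence $\psi$ itself attains a local minimum at $(y,s)$. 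Since $\psi$ is $C^2$ in $x$ and $C^1$ in $t$, this forces $D\psi(y,s)=0$, $\psi_t(y,s)=0$, and $D^2\psi(y,s)$ positive semi-definite.

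It remains to evaluate $\G(\psi)(y,s)=L_p(D\psi,D^2\psi)(y,s)+(p-1)|D\psi(y,s)|^p-(p-1)\psi_t(y,s)$. The last two terms vanish because $D\psi(y,s)=0$ and $\psi_t(y,s)=0$. For the first term, the definition in \eqref{sec2.4} gives $L_p(0,X)=0$ when $p>2$, which yields $\G(\psi)(y,s)=0\ge 0$; when $p=2$, $L_2(0,X)=\mathrm{tr}(X)\ge 0$ since $X=D^2\psi(y,s)\ge 0$, and again $\G(\psi)(y,s)\ge 0$. This completes (i).

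For (ii), one repeats the argument with $w\le \al$ on $\Om_T$: if $(y,s)\in\Om_T\setminus O$ and $w-\psi$ attains a local minimum at $(y,s)$, then an analogous chain of inequalities forces $\psi$ to have a local maximum at $(y,s)$, so $D\psi(y,s)=0$, $\psi_t(y,s)=0$, and $D^2\psi(y,s)\le 0$. The same case analysis on $p$ then gives $\G(\psi)(y,s)\le 0$. The only mildly delicate point is confirming that the gradient-degenerate clause of \eqref{sec2.4} correctly handles $p>2$ and that the sign of $\mathrm{tr}(D^2\psi(y,s))$ works out in the $p=2$ case; both are immediate once one writes the definitions out, so I do not expect any real obstacle beyond bookkeeping.
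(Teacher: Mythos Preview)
Your proof is correct and follows essentially the same approach as the paper: at any point $(y,s)\in\Om_T\setminus O$ the value $w(y,s)=\al$ is a global minimum of $w$, so any upper test function (equivalently, any element of $\mathcal{P}^+_{\Om_T}w(y,s)$) must have vanishing gradient and time derivative there, and the subsolution inequality follows. The paper phrases this via the semi-jet $(a,q,X)$ and deduces $a=0$, $q=0$ by the one-variable limits; you phrase it via test functions and the observation that $\psi$ itself has a local minimum, but the content is identical. Your explicit case split at $p=2$ (using $D^2\psi(y,s)\ge 0$ to get $\mathrm{tr}\ge 0$) is in fact slightly more careful than the paper's write-up, which simply asserts the resulting expression equals $0$.
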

\begin{proof} It is clear that we need check the claim only at points on $\p O\cap \Om_T$. We prove part (i), the proof part of (ii) is similar. 
\vsp
\NI Let $(y,s)\in \p O$ and $(a,q,X)\in \mathcal{P}^+_{\Om_T}w(y,s)$. Since $w\ge \al$ and $w(y,s)=\al$, we have
\bea\label{sec5.41}
0\le w(x,t)-w(y,s)\le
a(t-s)+\langle q, x-y\rangle&+&\frac{\langle X(x-y),x-y\rangle}{2}\\
&&+o(|t-s|+|x-y|^2),  \nonumber
\eea
as $(x,t)\rightarrow (y,s)$, where $(x,t)\in \Om_T$. Taking $x=y$ in (\ref{sec5.41}) we have that $a(t-s)+o(|t-s|)\ge 0$, as $t\rightarrow s$, implying that $a=0$. Next, taking $t=s$ in (\ref{sec5.41}), we see that $\langle q, x-y\rangle+o(|x-y|)\ge 0$, as $x\rightarrow z$. We obtain $q=0$ and as a result
$$|q|^{p-2}\mbox{tr}(X)+(p-2) |q|^{p-4} q_i q_j X_{ij} +(p-1)|q|^p-(p-1) a=0,$$
proving that $\G(w)\ge 0$ in $\Om_T$. The lemma is proved.
\end{proof}
\vsp
\NI{\bf Part I: Sub-solutions}
\vsp
\NI Recall (\ref{sec5.2}) and fix $y$, $s$, $\dl_0$ and $\tau_0$. We construct a sub-solution in a region $R$ that lies in $\overline{D}_{\dl_0,2\tau_0}(y,s)$ and then extend it to the rest of $\Om_T$ as a sub-solution. 
In what follows, the quantities $0<\tau\le \tau_0$ and $0<\dl\le \dl_0$, positive constants $k,\;\gamma$ and $c$ are such that
\bea\label{sec5.5}
k=\frac{1}{\tau} \log \left ( \frac{h(y,s)-2\ve}{m-2\ve} \right),\;\;\gm=\frac{p-n}{p-1}, \;\;\mbox{and} \;\;\dl=\left( \frac{k \tau}{c}\right)^{1/\gm}.
\eea
Here we take
\eqRef{sec5.6}
c\ge  \frac{ (k \tau)^{\mu} }{\gm^\gm \tau^{\gm/p} }, \;\;\;\mbox{where}\;\;\mu=\frac{\gm+p(1-\gm)}{p}.
\ee
By (\ref{sec5.5}) and (\ref{sec5.6}), it is clear that $\dl\rightarrow 0$ if $\tau\rightarrow 0$. We now fix a value of $\tau$ such that $0<\tau\le \tau_0$ and then a value of $c$ such that  $0<\dl\le \dl_0$. This also fixes the value of $k$. Our choice of $\gm$ shows that $\Lm=0$, see (\ref{sec5.3}). We comment that choosing $0<\gm< (p-n)/(p-1)$ will also work.
\vsp
\NI We now describe the region $R$. Firstly, $R\subset \overline{D}_{\dl,2\tau}(y,s)$ and is the union of two cusp-like regions $R^+$ and $R^-$, where $R^+$ and $R^-$ are as shown in Figure 1.
\begin{figure}[ht]
\includegraphics[scale=.4]{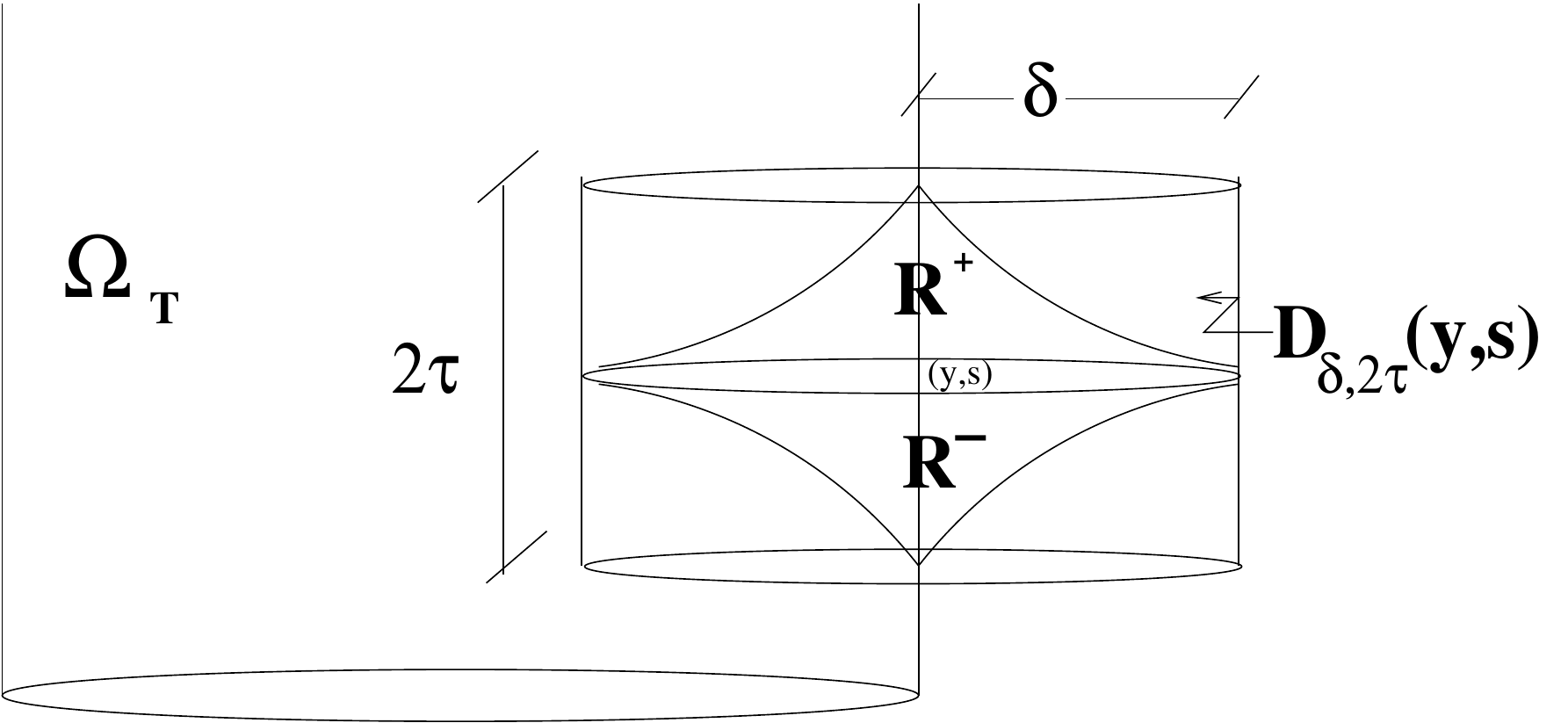}
\caption{Cusps}
\label{fig:cones1}
\end{figure}

 We now describe these more precisely.
Set $r=|x-y|$; define, in $0\le r\le \dl$ and $s-\tau\le t\le s+\tau$, 
\bea\label{sec5.7}
&&R^+\;\mbox{is the cusp:}\;\;k\tau+k(s-t)-cr^\gm\ge 0,\;\;\;s\le t\le s+\tau,\;\;\mbox{and}  \nonumber\\
&& R^- \;\mbox{is the cusp:}\;\;k\tau+k(t-s)-cr^\gamma\ge 0,\;\;\;s-\tau\le t\le s.
\eea
The choice of the various constants in (\ref{sec5.5}) and (\ref{sec5.6}) implies that $\overline{R}$ lies in the cylinder $\overline{B}_{\dl}(y)\times [s-\tau,s+\tau]$. The region $R^-$ is the reflection of $R^+$ about $t=s$. The base $R^+\cap R^-$, common to both the cusp regions, is at $t=s$ and is the spatial ball given by $0\le r \le \dl$. 
\vsp
\NI In $\overline{\Om}_T$, define the {\it{bump function} } 
\eqRef{sec5.8}
\eta(x,t)=\eta(r,t)=\left\{ \begin{array}{lcr} \log(m-2\ve)+k\tau+k(s-t)-cr^\gamma, && \forall(x,t)\in R^+,\\ \log(m-2\ve)+k\tau+k(t-s)-cr^\gamma, && \forall(x,t)\in R^- ,\\
\log(m-2\ve),&& \forall(x,t)\in \overline{\Om}_T\setminus R.
\end{array}\right.
\ee
From (\ref{sec5.2}) and (\ref{sec5.8}), we see that
\ben
&&(i) \; \eta(y,s)=\log(h(y,s)-2\ve),\;\;\;(ii) \; \eta(x,t)\ge \log(m-2\ve), \;\forall(x,t)\in \overline{\Om}_T,\\
&&(iii)\; \log(m-2\ve)\le\eta(x,t)\le\log( h(y,s)-2\ve)\le \log h(x,t),\;\forall(x,t)\in R\cap P_T,
\;\mbox{and}\\
&&(iv)\;\eta(x,t)\le \log h(x,t),\;\forall(x,t)\in P_T.
\een
\NI If we show that $\eta$ is a sub-solution in $\Om_T$, the observations (i)-(iv), listed above, would then imply that $\eta$ is a sub-solution of (\ref{sec1.2}). We first show that $\eta$ is a sub-solution in $R\cap \Om_T$. Theorem \ref{sec5.40} will then show that $\eta$ is a sub-solution in $\Om_T$. We consider: (a) $t\ne s$, and (b) $t=s$. In the following we take $0<r<\dl$, the case $r=\dl$ is contained in Theorem \ref{sec5.40}.
\vsp
\NI{\bf (a) $t\ne s$:} The function $\eta$ is $C^{\infty}$ in the interior of $R\cap \Om_T$ ($t\ne s$), since $r>0$ in $\Om_T$. Noting from (\ref{sec5.3}) that $\Lm=0$ and $\gm<1$, using (\ref{sec5.4}) and (\ref{sec5.8}), we get in $0<r< \dl$,
\bea\label{sec5.10}
\Dp \eta+(p-1)|D\eta|^p-(p-1)\eta_t &=&(p-1) \left(  c^p\gamma^{p}r^{p(\gamma-1)}  \pm k  \right)\nonumber\\
&\ge& (p-1) \left(  \frac{c^p\gamma^{p}}{r^{p(1-\gamma)}} - k  \right)\ge (p-1) \left(  \frac{c^p\gamma^{p}}{\dl^{p(1-\gamma)}} - k  \right)
\eea
We now calculate using (\ref{sec5.5}), (\ref{sec5.6}) and the definition of $\mu$,
\ben
\frac{c^p\gamma^{p}}{\dl^{p(1-\gamma)}}&=& c^p \gm^p \left( \frac{c}{k\tau} \right)^{p(1-\gm)/\gm}=c^{p/\gm}\frac{ \gm^p }{(k\tau)^{p(1-\gm)/\gm}}
\ge   \left(  \frac{ (k \tau)^{\mu} }{\gm^\gm \tau^{\gm/p} } \right)^{p/\gm} \frac{ \gm^p }{(k\tau)^{p(1-\gm)/\gm}}\\
&=& \frac{ (k\tau)^{ \{\gm+p(1-\gm) \}/\gm} }{ \tau (k\tau)^{p(1-\gm)/\gm} }=k.
\een
Hence, (\ref{sec5.10}) implies that $\eta$ is a sub-solution. 
\vsp
\NI{\bf (b) $t=s$:} Let $(z,s)\in R$  and $(a,q,X)\in \mathcal{P}^+_{R}\eta(z,s).$  Then $(x,t)\in R$ and
for $(x,t)\rightarrow (z,s)$, 
\eqRef{sec5.11}
\eta(x,t) \le  \eta(z,s) +a(t-s)+\langle q, x-z\rangle+\frac{\langle X(x-z),x-z\rangle}{2}+o(|t-s|+|x-z|^2).
\ee
\NI We now compute $a,\;q$ and $X$. Since $\eta$ is $C^{\infty}$ in $r>0$, using $t=s$ in (\ref{sec5.11}) we get $q=D\eta(z,s)$ and $X\ge D^2\eta(z,s)$. 
Using (\ref{sec5.8}) and taking $x=z$ in (\ref{sec5.11})
$$
a(t-s)+o(|t-s|)\ge \left\{ \begin{array}{lcr} k(t-s), && t\le s,\\  k(s-t), && t\ge s, \end{array} \right.\;\;\;\;\mbox{as}\;\;t\rightarrow s.
$$
Hence, $-k\le a\le k.$ Thus, from (\ref{sec5.8}) and the discussion following (\ref{sec5.10}), we get 
\ben
|q|^{p-2}\mbox{tr}(X)&+&(p-2)|q|^{p-4} q_iq_j X_{ij}+(p-1)|q|^p-a(p-1)\\
&\ge& \Dp \eta(z,s)+(p-1)|D\eta(z,s)|^p-k(p-1)\ge 0.
\een
Thus $\eta$ is a sub-solution in $R\cap \Om_T$. Define for any $(y,s)\in \p\Om\times(0,T)$ and $\ve>0$, small,
\eqRef{sec5.12}
\nu_{(y,s),\ve}(x,t)=\eta(x,t),\;\;\forall(x,t)\in \overline{\Om}_T. \;\;\;\Box
\ee
\vsp
\NI{\bf Part II: Super-solutions}
\vsp
\NI Let $\tau_0$ and $\dl_0$ be as in (\ref{sec5.2}). For ease of presentation, set $\al=(p-n)/(p-1)$, then $\Lm=\al-\gm$. 
Let $0<\tau\le \tau_0$, $0<\dl\le \dl_0$ and set
\bea\label{sec5.14}
k=\frac{1}{\tau} \log\left(\frac{M+2\ve}{h(y,s)+2\ve}\right),\;\;\gamma=\frac{\al}{1+2k\tau},\;\;\mbox{and}\;\;\dl=\left( \frac{k\tau}{c} \right)^{1/\gm}.
\eea
We take
\eqRef{sec5.15}
c\ge \left(\frac{2 (k\tau)^\mu }{\tau \Lm \gm^{p-1} } \right)^{\gm/p},\;\;\;\mbox{where}\;\;\mu=\frac{p(1-\gm)}{\gm}+2.
\ee
It is clear from (\ref{sec5.14}) and (\ref{sec5.15}) that $\dl\rightarrow 0$ as $\tau\rightarrow 0$. Fix a value $0<\tau\le \tau_0$ and then a value of $c$ such that $\dl\le \dl_0$.  
Also, our choice of $\gm$ implies that 
\eqRef{sec5.16}
\Lm=\al-\gm=\frac{2k\tau \al}{1+2k\tau}\;\;\;\mbox{and}\;\;\;c\gm \dl^\gm=\gm k\tau=\frac{k\tau \al}{1+2k\tau} =\frac{\Lm}{2}.
\ee
\vsp
\NI We construct the super-solution in a region $R$ the union of two cusps $R^+$ and $R^-$. These are defined as follows. 
 \ben
&& R^+\;\mbox{is the cusp-region:} \; k\tau+k(s-t)-cr^\gm\ge 0,\;\;s\le t\le s+\tau,\;\;\mbox{and}\\
&& R^-\;\mbox{is the cusp-region:} \; k\tau+k(t-s)-cr^\gm\ge 0,\;\;s-\tau\le t\le s.
\een
Clearly, $\overline{R}\subset \overline{B}_\dl(y)\times[s-\tau, s+\tau]$.  
Define the {\it indent function}  in $\Om_T$ as follows:
\eqRef{sec5.17}
\eta(x,t)= \left\{ \begin{array}{lcr}  \log(M+2\ve)+cr^\gm-k\tau -k(s-t), && \forall(x,t)\in R^+,\\ \log(M+2\ve)+c r^\gm-k\tau-k(t-s), && \forall(x,t)\in R^-,\\
\log(M+2\ve), && \forall(x,t)\in \overline{\Om}_T\setminus R. \end{array} \right.
\ee
From (\ref{sec5.14}), (\ref{sec5.15}) and (\ref{sec5.17}), we see that
\ben
&&(i) \; \eta(y,s)=\log(h(y,s)+2\ve),\;\;\;(ii)\; \eta(x,t)\le \log(M+2\ve), \;\forall(x,t)\in \overline{\Om}_T,\\
&&(iii)\; \log h(x,t)\le \log( h(y,s)+2\ve)\le \eta(x,t)\le\log(M+2\ve),\;\forall(x,t)\in R\cap P_T,\;\mbox{and}
\\
&&(iv) \;\;\eta(x,t)\ge \log h(x,t),\;\forall(x,t)\in P_T.
\een
\NI If we show that $\eta$ is a super-solution in $\Om_T$, the observations (i)-(v), listed above, would then imply that $\eta$ is a super-solution of (\ref{sec1.2}). We first show that $\eta$ is a super-solution in $R\cap \Om_T$. Theorem \ref{sec5.40} then shows that $\eta$ is a super-solution in $\Om_T$. We consider the cases: (a) $t\ne s$, and (b) $t=s$. 
\vsp
\NI{\bf (a) $t\ne s$:} Note that $\eta_t=\pm k$. Using (\ref{sec5.4}), (\ref{sec5.14}), (\ref{sec5.15}) and (\ref{sec5.16}), we calculate in $0<r\le \dl$,
\bea\label{sec5.18}
\Dp \eta+(p-1)|D\eta|^p-(p-1)\eta_t 
&=&(p-1) \left\{  c^{p-1}\gamma^{p-1}r^{p(\gamma-1)-\gamma} ( c\gamma r^\gamma-\Lm)\pm k  \right\}\nonumber\\
&\le &(p-1) \left\{ c^{p-1}\gamma^{p-1}r^{p(\gamma-1)-\gamma} ( c\gamma \dl^\gm-\Lm)+ k  \right\}\nonumber\\
&\le &(p-1) \left\{k- \frac{ c^{p-1}\gamma^{p-1}\Lm}{ 2\dl^{p(1-\gamma)+\gamma} }  \right\}.
\eea
Recalling (\ref{sec5.15}), (\ref{sec5.16}) and noting that $p-1+ 1+ p(1-\gm)/\gm=p/\gm$, we have 
\ben
\frac{ c^{p-1}\gamma^{p-1}\Lm}{ 2\dl^{p(1-\gamma)+\gamma} } &=&c^{p-1}  c^{1+p(1-\gm)/\gm}   \frac{\gm^{p-1}\Lm  } {2 (k\tau)^ {1+p(1-\gm)/\gm} } =
c^{p/\gm}\frac{\gm^{p-1}\Lm  } {2 (k\tau)^ {1+p(1-\gm)/\gm} }\\
&\ge& \left( \frac{2 (k\tau)^\mu }{\tau \Lm \gm^{p-1} } \right)  \left(   \frac{\gm^{p-1}\Lm  } {2 (k\tau)^ {1+p(1-\gm)/\gm} }   \right)=k.
\een
This together with (\ref{sec5.18}) yields that $\G(\eta)\le 0$.
\vsp
\NI{\bf (b) $t=s$:} We now show that $\eta$ is a super-solution in $R\cap\Om_T$ when $t=s$ and $0<r<\dl$.
Let $(z,s)\in R\cap \Om_T$ and $(a,q,X)\in  \mathcal{P}^-_{\Om_T}{\etb}(z,\tht)$, i.e., as $(x,t)\rightarrow (z,s)$, $(x,t)\in \Om_T$,
\eqRef{sec5.19}
\eta(x,t)-\eta(z,s)\ge a(t-s)+\langle q, x-z\rangle+\frac{ \langle X(x-z), x-z\rangle}{2}+o(|t-s|+|x-y|^2).
\ee
We take $x=z$ in (\ref{sec5.19}) and use (\ref{sec5.17}) to see that
$$a(t-s)+o(|t-s|)\le \left\{ \begin{array}{lcr} k(t-s), && s\le t\le s+\tau,\\ k(s-t), && s-\tau\le t\le s. \end{array} \right.$$
Thus, $-k\le a\le k$. Since $\eta$ is $C^2$ in $r>0$, $q=D\eta(z,s)$ and $X\le D^2\eta(z,s)$. Using the calculations done in (a), see (\ref{sec5.18}), we have
\ben
|q|^{p-2}\mbox{tr}X&+&(p-2)|q|^{p-4}\sum q_iq_j X_{ij}+(p-1)|q|^p-(p-1)a\\
&\le& \Dp\eta+(p-1)|D\eta|^p+(p-1)k\le 0.
\een
This shows that $\eta$ is a super-solution in the interior of $R\cap \Om_T$. For every $(y,s)\in \p\Om\times(0,T)$ and $\ve>0$, small, define
\eqRef{sec5.20}
\hat{\nu}_{(y,s),\ve}=\eta,\;\;\mbox{in $\overline{\Om}_T$}.\;\;\;\Box
\ee
\NI Recall that the functions $\al_{y,\ve},\;\hat{\al}_{y,\ve}$ and $e^{\nu_{(y,s),\ve}}$ are the required sub-solutions for (\ref{sec1.1}), see (\ref{sec4.9}), 
(\ref{sec4.10}) and (\ref{sec5.12}). Next,  $\beta_{y,\ve},\;\hat{\beta}_{y,\ve}$ and $e^{\hat{\nu}_{(y,s),\ve}}$ are the required super-solutions for (\ref{sec1.1}), see (\ref{sec4.12}) and (\ref{sec4.13}). These six functions are in $C(\overline{\Om}_T)$ and the Perron method implies Theorem \ref{sec1.4}.
\vspp
\section{Side Conditions: The case $2\le p\le n$ and proof of Theorem \ref{sec1.5}}
\vsp
\NI In this section, we assume that $\Om$ satisfies a uniform outer ball condition. To be more precise: there is a $\rho_0>0$ such that for each $y\in \p\Om$, if $0<\rho\le \rho_0$ then there is a $z\in \IR^n\setminus \Om$ such that ball $B_{\rho}(z)\subset \IR^n\setminus\Om$ and $y\in \p B_{\rho}(z)\cap \p\Om.$ The center of the region $R$ is the center of the outer ball and lies outside $\overline{\Om}_T$.  Moreover, $R$ lies in a cylindrical shell. Fix $\ve>0$, small, such that $m-2\ve>0$. 
\vsp
\NI Let $(y,s)\in P_T$ where $s>0$. Recall the notation $D_{\rho,2\tht}(x,t)=B_\rho(x)\times (t-\tht, t+\tht)$. 
There is a $\dl_0>0$ and $\tau_0>0$, small, depending on $y$ and $s$, such that
\eqRef{sec6.0}
h(y,s)-\ve\le h(x,t)\le h(y,s)+\ve,\;\;\;\forall(x,t)\in \overline{D}_{\dl_0,2\tau_0}(y,s)\cap P_T.
\ee
For any $\gm>0$, we define (see (\ref{sec5.3}))
\eqRef{sec6.1}
\Lm=\gm-\frac{n-p}{p-1}.
\ee
Recall that $m=\inf_{P_T}h$, $M=\sup_{P_T}h$, and assume that $0<m\le M<\infty$. 
\vsp
\NI{\bf Part I:  Sub-solutions }
\vsp
\NI By our hypothesis, there are $z\in \IR^n\setminus \Om$ and $0<\rho\le \rho_0$ such that $B_\rho(z)\subset \Om^c$ and $y\in \p B_\rho(z)\cap \p\Om$. Note that $z$ depends on $\rho$. Set $r=|x-z|$; the region $R$ will be in the cylindrical shell $(\overline{B}_{\rho+\dl}(z)\setminus B_\rho(z))\times [s-\tau, s+\tau]$, where $\rho$, $\dl$ and $\tau$ will be determined below. To begin with we require that this shell be in $D_{\dl_0, 2\tau_0}(y,s)$ and this is achieved if $\rho+\dl\le \dl_0/2$. We fix a value of $0<\tau\le \tau_0$ and impose that 
\vsp
\NI Set
\eqRef{sec6.2}
\gm>\frac{n-p}{p-1},\;\;k=\frac{1}{\tau} \log\left( \frac{h(y,s)-2\ve}{m-2\ve} \right)\;\;\;\mbox{and}\;\;\;\dl=\rho \left\{\left(\frac{1}{1-\rho^\gm k\tau}\right)^{1/\gm}-1\right\}.
\ee
Choose 
\eqRef{sec6.2r}
0<\rho\le \min\left\{ \rho_0,\;(k\tau)^{-1/\gm},\; \left( \frac{A}{1+k\tau A} \right)^{1/\gm}\right\},\;\;\mbox{and}\;\;A= \left( \frac{\gm^{p}}{k} \right)^{\gm/(p(1+\gm))} 
\ee
where $\rho$ is small enough to ensure that $\rho+\dl\le \dl_0/2$. This is possible since the function $\dl=\dl(\rho)$ is increasing and its range is $[0,\infty)$. 
\vsp
\NI The region $R$ is the union of two regions $R^+$ and $R^-.$ We now describe these more precisely.
For $0<\rho\le r\le \rho+\dl$ and $s-\tau\le t\le s+\tau$, we define the regions
\bea\label{sec6.3}
&&R^+\;\mbox{is the cusp:}\;\;k\tau+k(s-t)+r^{-\gm}-\rho^{-\gm}\ge 0,\;\;s\le t\le s+\tau,\;\;\mbox{and}  \nonumber\\
&& R^- \;\mbox{is the cusp:}\;\;k\tau+k(t-s)+r^{-\gm}-\rho^{-\gm}\ge 0,\;\;s-\tau\le t\le s.
\eea
The region $R^-$ is the reflection of $R^+$ about $t=s$. The base $R^+\cap R^-$, common to both the regions, is at $t=s$ and is the spatial annulus given by $\rho\le r\le \rho+\dl.$ See Figure 2.
\begin{figure}[ht]
\includegraphics[scale=.4]{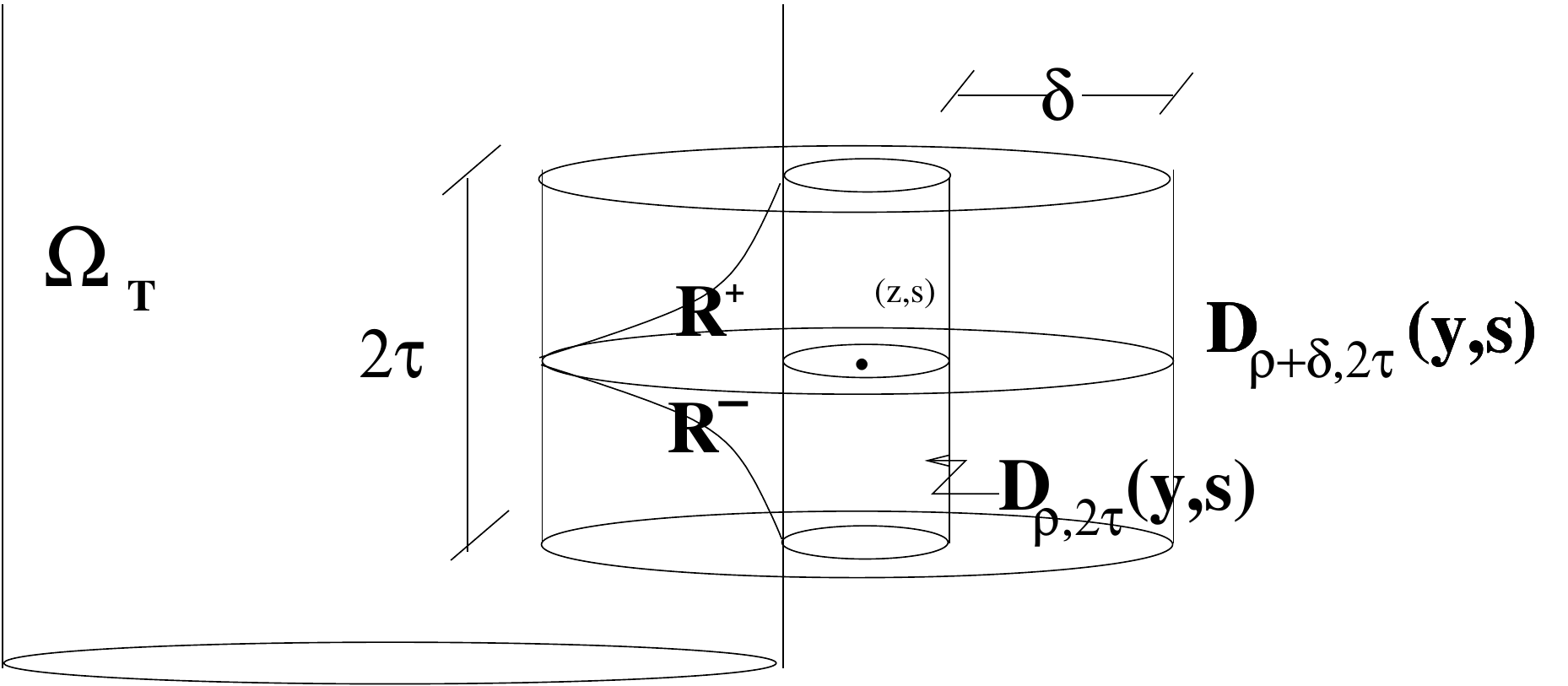}
\caption{Regions $R^+$ and $R^-$}
\label{fig:cones2}
\end{figure}
From (\ref{sec6.2}), $\overline{R}\subset (\overline{B}_{\rho+\dl}(z)\setminus B_\rho(z))\times [s-\tau, s+\tau]$.
\vsp
\NI Define the following {\it{bump function} } in $\Om_T$:
\eqRef{sec6.4}
\eta(x,t)=\eta(r,t)=\left\{ \begin{array}{lcr} \log(m-2\ve)+k\tau+k(s-t)+r^{-\gm}-\rho^{-\gm}, && \forall(x,t)\in R^+,\\ \log(m-2\ve)+k\tau+k(t-s)+r^{-\gm}-\rho^{-\gm}, && \forall(x,t)\in R^- ,\\
\log(m-2\ve)&& \forall(x,t)\in \overline{\Om}_T\setminus R
\end{array}\right.
\ee
From (\ref{sec6.2}), (\ref{sec6.3}) and (\ref{sec6.4}), we see that
\ben
&&(i) \; \eta(y,s)=\log(h(y,s)-2\ve),\;\;\;(ii) \;  \eta(x,t)\ge \log(m-2\ve), \;\forall(x,t)\in \overline{\Om}_T,   \\
&&(iii)\; \log(m-2\ve)\le\eta(x,t)\le\log( h(y,s)-2\ve)\le \log h(x,t),\;\forall(x,t)\in R\cap P_T,\;\mbox{and}\\
&&(iv)\;\eta(x,t)\le \log h(x,t),\;\forall(x,t)\in P_T.
\een
\NI If we show that $\eta$ is a sub-solution in $\Om_T$, the observations (i)-(iv), listed above, would then imply that $\eta$ is a sub-solution of (\ref{sec1.2}). We first show that $\eta$ is a sub-solution in $R\cap \Om_T$. We consider: (a) $t\ne s$, and (b) $t=s$. We show (a), the proof of (b) is similar  to that in Part I of Section 5. Theorem \ref{sec5.40} then shows $\eta$ is a sub-solution in $\Om_T$.
\vsp
\NI{\bf (a) $t\ne s$:} The function $\eta$ is $C^{\infty}$ in the interior of $R\cap \Om_T$, for $t\ne s$. Using (\ref{sec5.4}) and (\ref{sec6.4}), we get in $\rho\le r\le \rho+\dl$, $s-\tau\le t\le s+\tau$, 
\bea\label{sec6.7}
\Dp \eta+(p-1)|D\eta|^p-(p-1)\eta_t&=&(p-1) \left[ \frac{\gm^p}{r^{p(\gm+1)}} \left\{1+ \frac{r^\gm\Lm}{\gm }  \right\}\pm k\right]\nonumber\\
&\ge& (p-1) \left( \frac{\gm^p}{r^{p(\gm+1)}}- k\right)\nonumber\\
&\ge& (p-1) \left( \frac{\gm^p}{(\rho+\dl)^{p(\gm+1)}}- k\right).
\eea
Using (\ref{sec6.2}) and (\ref{sec6.2r}), 
$$\rho+\dl=\left(\frac{\rho^\gm}{1-\rho^\gm k\tau}\right)^{1/\gm}\le A^{1/\gm}=\left(\frac{\gm^p}{k}\right)^{1/(p(1+\gm))}.$$ 
Thus, (\ref{sec6.7}) implies $\Dp \eta+(p-1)|D\eta|^p-(p-1)\eta_t \ge 0$, in $R\cap \Om_T$,
$t\ne s.$ Set for each $(y,s)\in P_T$, 
\eqRef{sec6.70}
\hat{\nu}_{(y,s),\ve}=\eta\;\;\mbox{ on $\overline{\Om}_T$.} \ee
\vspp
\NI{\bf Part II: Super-solutions}
\vsp
\NI Our treatment differs slightly from the one in Section 5 in that we employ scaling and work with an altered equation.
\vsp 
\NI We utilize the change of variables described in part (a) (iii) of Remark \ref{sec2.14}. Let $\lam>0$; set $\om=\lam^{p-2} t$ and 
$\Om_{\lam^{p-2} T}=\{( x, \lam^{p-2} t):\;(x,t)\in \Om_T\}$. Suppose that $w(x, \om)=v(x,t)/\lam$ then the following holds:
$\Dp v+(p-1)|Dv|^{p}-(p-1)v_t\ge (\le) 0$, in $\Om_{T}$, if and only if
\eqRef{sec6.71}
\Dp w+\lam(p-1)|Dw|^{p}-(p-1)w_\om \ge (\le) 0,\;\;\mbox{in $\Om_{\lam^{p-2}T}$.}
\ee
\vsp
\NI Let $(y,s)\in P_T,\;s>0,$ and call
\eqRef{sec6.72}
\al=\log\left( \frac{M+2\ve}{h(y,s)+2\ve}\right).
\ee
We fix a value of $\lam>0$, small, such that
\eqRef{sec6.73}
\al \lam<1.
\ee
\NI Set 
$$\om=\lam^{p-2} t,\;\; \hs=\lam^{p-2}s,\;\; \hT=\lam^{p-2} T\;\; \mbox{and}\;\;\hh(x,\om)=h(x,t)^{1/\lam}.$$
\NI Our goal is to construct a super-solution $\varphi(x,\om)$ of (\ref{sec6.71}), i.e.,
\eqRef{sec6.74}
\Dp \varphi+\lam(p-1)|D\varphi|^{p-2}-(p-1)\varphi_\om\le 0,\;\;\mbox{in $\Om_{\hT},$ \quad $\varphi\ge \log\hh$ on $P_{\hT}$},
\ee
such that $\varphi(y,\hs)$ is close to $\hh(y,\hs)$.
By (\ref{sec6.71}) the function $\eta(x,t)=\lam \varphi(x, \om)$ is then a super-solution of (\ref{sec1.2}) with $\eta(y,s)$ close to $h(y,s)$.
\vsp
\NI Recalling (\ref{sec6.72}) and (\ref{sec6.73}), choose $0<\hve\le \ve$, small, such that
\eqRef{sec6.75}
\hh(y,\hs)+2\hve\le \left( h(y,s)+2\ve \right)^{1/\lam}\;\;\;\mbox{and}\;\;\;\lam \log\left( \frac{M^{1/\lam}+2\hve}{\hh(y,\hs)+2\hve}\right)<1.
\ee
Next, there are $\tau_0>0$ and $\dl_0>0$, small, such that
\eqRef{sec6.18}
\hh(y,\hs)-\hve\le \hh(x,\om)\le \hh(y,\hs)+\hve,\;\;\;\forall(x,\om)\in \overline{D}_{\dl_0,\tau_0}(y,\hs)\cap P_{\hT}.
\ee
\vsp
\NI We fix $0<\tau\le \tau_0$ and set $\hat{M}=M^{1/\lam}$. Define 
\eqRef{sec6.20}
k=\frac{1}{\tau} \log\left(\frac{\hat{M}+2\hve}{\hh(y,\hs)+2\hve}\right).
\ee
Recalling (\ref{sec6.75}) fix values of $0<\tht<1$ and $\gm>(n-p)/(p-1)$ such that
\eqRef{sec6.201}
\lam k\tau=\frac{\tht^2 \Lm}{\gm},
\ee
also see (\ref{sec6.1}).
For $c>0$, to be chosen later, set (use (\ref{sec6.201}))
\eqRef{sec6.21}
\rho^\gm=\frac{c\tht}{k\tau}=\frac{c\lam\gm }{\tht\Lm}\;\;\;\mbox{and}\;\; \dl=\left(\frac{c\tht}{k\tau}\right)^{1/\gm} \left\{ \left( \frac{1}{1-\tht} \right)^{1/\gm}-1\right\}=\rho\left\{ \left( \frac{1}{1-\tht}\right)^{1/\gm}-1\right\}.   \ee
The expression for $\dl$ follows by setting $k\tau=c(\rho^{-\gm}-(\rho+\dl)^{-\gm})$. Note we will select $c$, small, so that $\rho+\dl\le \dl_0/2$. 
\vsp
\NI We now describe the region $R$. By the outer ball condition, for each $0<\rho\le \rho_0$, there is a $z\in \IR^n\setminus \overline{\Om}$ (depending perhaps on $\rho$) such that $B_\rho(z)\subset \IR^n\setminus \overline{\Om}$ and
$y\in \p B_\rho(z)$. Define $r=|x-z|$: set in $\rho\le r\le \rho+\dl$ and $\hs-\tau\le \om\le \hs+\tau,$
\bea\label{sec6.22}
&&R^+\;\mbox{is the region:}\;\;k(\hs-\om)+k\tau+cr^{-\gm}-c\rho^{-\gm}\ge 0,\;\;\hs\le \om\le \hs+\tau,\;\;\mbox{and}  \nonumber\\
&& R^- \;\mbox{is the region:}\;\;k(\om-\hs)+k\tau+cr^{-\gm}-c \rho^{-\gm}\ge 0,\;\;\hs-\tau\le \om\le \hs.
\eea
by (\ref{sec6.21}) the region $R$ lies in the cylindrical shell $(\overline{B}_{\rho+\dl}(z)\setminus B_\rho(z) )\times[\hs-\tau, \hs+\tau]$.
\vsp
\NI In $\Om_T$, define the {\it{indent function} } 
\eqRef{sec6.23}
\varphi(x,\om)=\left\{ \begin{array}{lcr} \log(\hat{M}+2\hve)-k(\hs-\om)-k\tau+c\rho^{-\gm}-c r^{-\gm}, && \forall(x,\om)\in R^+,\\ \log(\hat{M}+2\hve)-k(\om-\hs)-k\tau+c\rho^{-\gm}-c r^{-\gm}, && \forall(x,\om)\in R^- ,\\
\log(\hat{M}+2\hve),&& \forall(x,\om)\in \overline{\Om}_{\hT}\setminus R.
\end{array}\right.
\ee
From (\ref{sec6.20}), (\ref{sec6.21}), (\ref{sec6.22}) and (\ref{sec6.23}) we see that
\ben
&&(i) \; \varphi(y,\hs)=\log(\hh(y,\hs)+2\hve),\;\;\;(ii) \; \varphi(x,t)\le \log(\hat{M}+2\hve), \;\forall(x,\om)\in \overline{\Om}_{\hT},\\
&&(iii)\; \log \hh(x,t)\le \log( \hh(y,\hs)+2\hve)\le \varphi(x,\om)\le\log(\hat{M}+2\hve),\;\forall(x,t)\in R\cap P_{\hT},\;\mbox{and}\\
&&(iv)\;\varphi(x,\om)\ge \log \hh(x,\om),\;\forall(x,\om)\in P_{\hT}.
\een
\NI If we show that $\varphi$ is a super-solution in $\Om_{\hT}$, the observations (i)-(iv), listed above, would then imply that $\eta(x,t)=\lam \varphi(x, \om)$ is a super-solution of (\ref{sec1.2}). To do this we first show that $\eta$ is a super-solution in $R\cap \Om_{\hT}$. We consider: (a) $t\ne s$, and (b) $t=s$. Theorem \ref{sec5.40} will then show that $\varphi$ is a super-solution in $\Om_{\hT}$. Proof of (b) is similar to the proof in Part II of Section 5.
\vsp
\NI For ease of presentation, call
$$\G_\lam(\phi)=\Dp \phi+\lam(p-1)|D\phi|^p-(p-1)\phi_\om.$$

\NI{\bf (a) $t\ne s$:} Applying (\ref{sec5.4}) and using (\ref{sec6.23}) in $\rho\le r\le \rho+\dl$ and 
$\hs-\tau\le t\le \hs+\tau$,
\bea\label{sec6.25}
\G_\lam(\varphi)
&=&\frac{(p-1) c^{p-1}\gm^p}{ r^{p(1+\gm)}} \left(c\lam- \frac{r^\gm\Lm}{\gm}   \right)\pm (p-1)\hk \nonumber\\
&\le&(p-1) \left\{ \frac{c^{p-1}\gm^{p-1}}{ r^{p(1+\gm)-\gm}} \left(   \frac{c\lam \gm}{r^\gm}- \Lm   \right)+k\right\} \nonumber \\
&\le&(p-1) \left\{   \frac{(c\gm)^{p-1}}{ r^{p(1+\gm)-\gm}} \left( \frac{c\lam\gm}{\rho^\gm}- \Lm \right)  + k\right\}.
\eea
Using (\ref{sec6.21}), (\ref{sec6.25}) leads to, in $\rho\le r\le \rho+\dl$ and 
$\hs-\tau\le t\le \hs+\tau$,
\eqRef{sec6.26}
\G_\lam(\varphi)\le (p-1)\left\{  k -\frac{(1-\tht)\Lm(c\gm)^{p-1}}{ r^{p(1+\gm)-\gm}} \right\}\le (p-1)\left\{  k -\frac{(1-\tht)\Lm(c\gm)^{p-1}}{ (\rho+\dl)^{p(1+\gm)-\gm}} \right\}.
\ee
Call $\vartheta=p(1+\gm)-\gm$ and observe that
\eqRef{sec6.260}
1+\frac{\vartheta}{\gm}=\frac{p(1+\gm)}{\gm}\;\;\mbox{and}\;\;p-1-\frac{\vartheta}{\gm}=-\frac{p}{\gm}.
\ee
Recall from (\ref{sec6.21}) that $\rho+\dl=\rho(1-\tht)^{-1/\gm}$. Calculating (see (\ref{sec6.26})) using (\ref{sec6.21}) and (\ref{sec6.260}) we obtain
\ben
\frac{(1-\tht)(c\gm)^{p-1} \Lm}{ (\rho+\dl)^{\vartheta}}&=&\frac{(1-\tht)(1-\tht)^{\vartheta/\gm} (c\gm)^{p-1} \Lm}{\rho^\vartheta}=\Lm\gm^{p-1}(1-\tht)^{p(1+\gm)/\gm} \left(\frac{c^{p-1}}{\rho^\vartheta}\right)\\
&=&  \Lm \gm^{p-1}(1-\tht)^{p(1+\gm)/\gm}\left\{c^{p-1} \left( \frac{ k\tau}{ c\tht}\right)^{\vartheta/\gm} \right\}\\
&=& \Lm \gm^{p-1} (k\tau)^{\vartheta/\gm}\left( \frac{(1-\tht)^{p(1+\gm)}} {\tht^\vartheta} \right)^{1/\gm}\left( \frac{c^{p-1}} {c^{\vartheta/\gm}}\right)
=\frac{L}{c^{p/\gm}},
\een
where $L=L(\Lm, \gm, p, \tht, k\tau)$. Thus, we obtain from (\ref{sec6.26})
$$\G_\lam(\varphi)\le (p-1) \left( k -\frac{L}{c^{p/\gm}} \right)\le 0,$$
if we choose $c$ small enough. Our choice now determines $\rho,\;\dl$ and satisfies $\rho+\dl\le \dl_0/2$. Set for each $\ve>0$ and $(y,s)\in P_T$, $\hat{\mu}_{(y,s),\ve}=\eta$ on $\Om_T$.
\vsp
\NI Recall that the functions $\al_{y,\ve},\;\hat{\al}_{y,\ve}$ and $e^{\hat{\nu}_{(y,s),\ve}}$ are the required sub-solutions for (\ref{sec1.1}), see (\ref{sec4.9}), (\ref{sec4.10}) and (\ref{sec6.70}). Next,  $\beta_{y,\ve},\;\hat{\beta}_{y,\ve}$ and $e^{\hat{\mu}_{(y,s),\ve}}$ are the required super-solutions for (\ref{sec1.1}), see (\ref{sec4.12}) and (\ref{sec4.13}). These are in $C(\overline{\Om}_T)$ and
the Perron method implies Theorem \ref{sec1.5}.

\end{document}

\NI The region $R$ is in $\overline{D}_{\dl,2\tau}(y,s)$ and is the union of two cusp-like regions $R^+$ and $R^-$, where $R^+$ and $R^-$ are as shown in Figure 1. We now describe these more precisely.
\vsp
\NI In what follows, $c,\;\rho$ and $\dl$ are small positive numbers. The quantities $\rho$ and $\dl$ depend on $c$, and their values are determined once $c$ is chosen. This will be done later. Next, fix $\tau<\tau_0$ and define
\eqRef{sec6.8}
k=\frac{1}{\tau} \log\left(\frac{M+2\ve}{h(y,s)+2\ve}\right),\;\;\mbox{and}\;\;0<\dl\le \dl_0.
\ee
\NI{\bf (A):} We assume that $0<k\tau<1$, i.e.,
$$0<\log\left(\frac{M+2\ve}{h(y,s)+2\ve}\right)<1.$$
Fix $0<\tht<1$, close to $1$ and $\mu>(p-n)/(p-1)$ such that (see (\ref{sec6.0}))
\eqRef{sec6.81}
k\tau=\frac{\tht^2 \Lm}{\mu}.
\ee
By the outer ball condition, there is a $p\in \IR^n$ and $0<\rho\le \rho_0$ such that $B_\rho(p)\subset \IR^n\setminus \Om $ and $y\in \p B_\rho(p)\cap \p\Om$. Set $r=|x-p|$; fix $0<\tau\le \tau_0$. We set 
\eqRef{sec6.82}
\rho^\mu=\frac{c\tht}{k\tau}=\frac{c\mu }{\tht\Lm}.
\ee
We set $\dl$ as follows:
\eqRef{sec6.9}
\dl=\rho\left\{ \left( \frac{1}{1-\tht} \right)^{1/\mu}-1\right\}= \left(\frac{c\tht}{k\tau}\right)^{1/\mu} \left\{ \left( \frac{1}{1-\tht} \right)^{1/\mu}-1\right\}.
\ee
We will choose $c$ (to be done later) so that $\rho\le \rho_0$ such that $\dl\le \dl_0$. 
\vsp
\NI For $\rho\le r\le \rho+\dl$ and $s-\tau\le t\le s+\tau$, and define the regions  
\bea\label{sec6.10}
&&R^+\;\mbox{is the cusp:}\;\;k(s-t)+k\tau+\frac{c}{r^\mu}-\frac{c}{\rho^\mu}\ge 0,\;s\le t\le s+\tau,\;\;\mbox{and}  \nonumber\\
&& R^- \;\mbox{is the cusp:}\;\;k\tau+k(t-s)+\frac{c}{r^\mu}-\frac{c}{\rho^\mu}\ge 0,\;s-\tau\le t\le s.
\eea
The region $R^-$ is the reflection of $R^+$ about $t=s$. The base $R^+\cap R^-$, common to both the cusp regions, is at $t=s$ and is the spatial ball given by $\rho\le |x-p|\le \rho+\dl$.
\vsp
\NI In $R$, define the {\it{indent function} } 
\eqRef{sec6.11}
\eta(x,t)=\eta(r,t)=\left\{ \begin{array}{lcr} \log(M+2\ve)+k(s-t)-k\tau+c\rho^{-\mu}-c r^{-\mu}, && \forall(x,t)\in R^+,\\ \log(M+2\ve)+k(t-s)-k\tau+c\rho^{-\mu}-c r^{-\mu}, && \forall(x,t)\in R^- .
\end{array}\right.
\ee
We obtain the following from (\ref{sec6.8}), (\ref{sec6.10}) and (\ref{sec6.11}):
\ben
&&(i) \; \eta(\rho,s)=\log(h(y,s)+2\ve),\quad\;(ii) \; \eta(x,t)=\log(M+2\ve),\;\forall(x,t)\in \p R,\;\mbox{and}\\
&&(iii) \; \eta(x,t)\le \log(M+2\ve), \;\forall(x,t)\in R. 
\een
Using (i), (ii) and (iii) listed above and (\ref{sec6.10}), we conclude that
\eqRef{sec6.12}
\log h(x,t)\le\log( h(y,s)+2\ve)\le\eta(x,t)\le \log(M+2\ve),\;\forall(x,t)\in R.
\ee
\vsp
\NI As done before, we show that $\eta$ is a super-solution in $R\cap \Om_T$. We consider the two cases: (i) $t\ne s$, and (ii) $t=s$. Recalling (\ref{sec6.11}) and taking $\gm=-\mu$ in (\ref{sec5.100}) we obtain in $\rho\le r\le \rho (1-\tht)^{-1/\mu}$,
\bea\label{sec6.13}
\Dp \eta&+&(p-1)|D\eta|^p-(p-1)\eta_t  \nonumber\\
&=&(p-1) c^{p-1}\mu^p r^{p(-\mu-1)} \left(c- \frac{r^\mu\Lm}{\mu}   \right)\pm (p-1)k  \nonumber\\
&=&(p-1)c^{p-1}\mu^{p-1} r^{p(-\mu-1)+\mu} \left(   \frac{c\mu}{r^\mu}- \Lm   \right)\pm (p-1)k \nonumber \\
&\le&(p-1) \left[   \frac{(c\mu)^{p-1}}{ r^{p(1+\mu)-\mu}} \left( \frac{c\mu}{\rho^\mu}- \Lm \right)  + k\right]
\eea
By(\ref{sec6.81}) and (\ref{sec6.82}), $c\mu/\rho^\mu=\tht\Lm$.
Thus, (\ref{sec6.13}) leads to
\eqRef{sec6.14}
\Dp \eta+(p-1)|D\eta|^p-(p-1)\eta_t\le (p-1) \left[  k -\frac{(1-\tht)\Lm(c\mu)^{p-1}}{ r^{p(1+\mu)-\mu}} \right]
 \ee
Call
$$\vartheta=p(1+\mu)-\mu.$$
Noting that $\rho+\dl=\rho(1-\tht)^{-1/\mu}$, we impose that
$$
k\le  \frac{(1-\tht)\Lm(c\mu)^{p-1}}{ r^\vartheta},\;\;\;\;\mbox{in}\;\;\rho<r\le \frac{\rho}{(1-\tht) ^{1/\mu}}.
$$
That is
\eqRef{sec6.15}
r \le \frac{\rho}{ (1-\tht) ^{1/\mu} } \le \left( \frac{(1-\tht)\Lm (c\mu)^{p-1} }{k} \right)^{1/\vartheta}.
\ee
Note that
\eqRef{sec6.16}
\frac{1}{\vartheta}+\frac{1}{\mu}=\frac{p(1+\mu)}{\mu\vartheta}\;\;\;\mbox{and}\;\;\;\frac{p-1}{\vartheta}-\frac{1}{\mu}=-\frac{p}{\mu \vartheta}.
\ee
Using (\ref{sec6.15}), (\ref{sec6.16}) and (\ref{sec6.82}),
\ben
\left( \frac{c\tht}{k\tau} \right)^{1/\mu} \le 
\left( \frac{\Lm (c\mu)^{p-1} }{k} \right)^{1/\vartheta}  (1-\tht)^{p(1+\mu)/(\mu\vartheta)}
\een
Rewriting, we see that
$$\left( \frac{\tht}{k\tau} \right)^{1/\mu} \le \frac{ (1-\tht)^{p(1+\mu)/(\mu \vartheta)} }{ c^{p/( \mu\vartheta)} }      \left( \frac{\Lm \mu^{p-1} }{k} \right)^{1/\vartheta}.  $$
Choosing $c$ small enough, it follows from (\ref{sec6.14}) that $\eta$ is a super-solution. The values of $\rho$ and $\dl$ are now determined.

\section{Sub-solutions for $p= n$}

We assume that $\Om$ satisfies a uniform outer ball condition. That is, there is a $\rho>0$ such that for any $p\in \p\Om$, there is a ball $B_\rho(y)\subset \Om^c$ with 
$p\in \p B_\rho(y)\cap \p\Om.$ \\

\NI{\bf Sub-solutions for the side condition:}

The region $R$ is in $\overline{D}_{\dl,2\tau}(y,s)$ and is the union of two cusp-like regions $R^+$ and $R^-$, where $R^+$ and $R^-$ are as shown in Figure 1. We now describe these more precisely.

By the outer ball condition, there is a $p\in \IR^n$ and $0<\dl<\rho$ such that $B_\dl(y)\subset \Om^c$ and $p\in \p B_\dl(y)\cap \p\Om$.
Set $r=|x-p|$, and choose $\;k,\;\tau$ and $\dl$ as in (\ref{sub8}). We take $\dl\le r\le 2\dl$ and $s-\tau\le t\le s+\tau$, where $\dl>0,\;\tau>0$ will be chosen later. We define the 
\bea\label{sub81}
&&R^+\;\mbox{is the cusp:}\;\;k\tau+k(s-t)+\log\rho-\log r\ge 0,\;s\le t\le s+\tau,\;\;\mbox{and}  \nonumber\\
&& R^- \;\mbox{is the cusp:}\;\;k\tau+k(t-s)+\log\rho-\log r\ge 0,\;s-\tau\le t\le s,
\eea
where $\gamma<0$ will be chosen below. The region $R^-$ is the reflection of $R^+$ about $t=s$. The base $R^+\cap R^-$, common to both the cusp regions, is at $t=s$ and is the spatial ball given by $\rho\le |x-p|\le \rho+\dl,$ with 
$$
\dl=\rho\left(e^{k\tau}-1\right).$$ 

Again, we take
$$k\tau=\log\left(  \frac{f(y)-2\ve}{m-2\ve} \right).$$

In $R$, define the {\it{indent function} } 
\eqRef{sub9}
\eta(x,t)=\eta(r,t)=\left\{ \begin{array}{lcr} \log(m-2\ve)+k\tau+k(s-t)+\log\rho-\log r, && \forall(x,t)\in R^+,\\ \log(m-2\ve)+k\tau+k(t-s)+\log\rho-\log r, && \forall(x,t)\in R^- .
\end{array}\right.
\ee
The last three equations give: 
\ben
&&(i) \; \eta(\rho,s)=\log(h(y,s)-2\ve),\quad \\
&&(ii) \; \eta(x,t)=\log(m-2\ve),\;\forall(x,t)\in \p R,\;\mbox{and}\\
&&(iii) \; \eta(x,t)\ge \log(m-2\ve), \;\forall(x,t)\in R. 
\een

Using (i), (ii) and (iii) listed above and (\ref{sub7}), we conclude that
\eqRef{sub900}
\log(m-2\ve)\le\eta(x,t)\le\log( h(y,s)-2\ve)\le \log h(x,t),\;\forall(x,t)\in R.
\ee

Since $r>0$ in $\Om_T$, the function $\eta$ is $C^{\infty}$ in the interior of $R\cap \Om_T$ for $t\ne s$. Using (\ref{sub8}) and differentiating (\ref{sub9}) we obtain:
$\eta^{'}(r)=-\frac{1}{r}$ and $\eta^{''}(r)=\frac{1}{r^2}$. Using these 
\begin{multline}\label{sub90}
\Dp \eta+(p-1)|D\eta|^p-(p-1)\eta_t \\
=|\eta^{'}(r)|^{p-2} \left( (p-1) \eta^{''}(r)+\frac{(n-1)}{r} \eta^{'}(r) \right)+(p-1)|\eta^{'}(r)|^{p}\pm k(p-1)\\
= |r|^{-p+2} \left( (p-1) r^{-2}-(n-1) r^{-2} \right)+(p-1)|\eta^{'}(r)|^{p}\pm k(p-1)\\
=(p-1) | r|^{-p}\pm (p-1)k=(n-1)(|r|^{-p} \pm k ) 
\end{multline}
We take $\rho>0$, small enough so that
$$
\rho\le \left(\frac{1}{e^{k\tau}k}\right)^{1/p}.
$$

Thus, (\ref{sub90})
$$
\Dp \eta+(p-1)|D\eta|^p-(p-1)\eta_t 
\ge  0,  
\qquad \mbox{in}\;R\cap \Om_T,\;\;t\ne s.
$$
\vsp

The set $\XC u(y,s)$ is the closure of the set  $(a,q,X)\in \IR\times \IR^n\times \mathcal{S}(n)$ such that for each $m=1,2,\cdots,$ 
there exists $((x_m,t_m), a_m,q_m,X_m)\in \Om_T\times \IR\times\IR^n\times \mathcal{S}(n)$ 
with
$(a_m,q_m,X_m)\in \XO u(x_m,t_m)$ so that $(x_m,t_m)\rightarrow (y,s)$, $u(x_m,t_m)\rightarrow u(y,s)$ and $(a_m,q_m,X_m)\rightarrow (a,q,X)$.
The set $\YC u(y,s)$ is defined similarly.